\theoremstyle{plain}
\newtheorem{thm}{Theorem}[section]
\newtheorem{lem}[thm]{Lemma}
\newtheorem{prop}[thm]{Proposition}
\theoremstyle{definition}
\theoremstyle{remark}
\newtheorem{rem}[thm]{Remark}
\numberwithin{equation}{section}
\newcommand{\average}{{\mathchoice {\kern1ex\vcenter{\hrule height.4pt
width 6pt depth0pt} \kern-9.7pt} {\kern1ex\vcenter{\hrule
height.4pt width 4.3pt depth0pt} \kern-7pt} {} {} }}
\begin{document}

\title[Stable solutions in domains of double revolution]
{Regularity of stable solutions up to dimension 7 in domains of double revolution}

\author{Xavier Cabr\'e}

\address{ICREA and Universitat Polit\`ecnica de Catalunya,
Departament de Matem\`{a}tica  Aplicada I, Diagonal 647, 08028 Barcelona, Spain}

\email{xavier.cabre@upc.edu}

\thanks{Both authors were supported by MTM2008-06349-C03-01,
MTM2011-27739-C04-01 (Spain) and 2009SGR345 (Catalunya).}

\author{Xavier Ros-Oton}

\address{Universitat Polit\`ecnica de Catalunya, Departament de Matem\`{a}tica
Aplicada I, Diagonal 647, 08028 Barcelona, Spain}
\email{xavier.ros.oton@upc.edu}

\keywords{Semilinear elliptic equations, regularity of stable solutions}

\begin{abstract} We consider the class of semi-stable positive solutions to
semilinear equations $-\Delta u=f(u)$ in a bounded domain $\Omega\subset\mathbb R^n$
of double revolution, that is, a domain invariant under rotations of the first $m$
variables and of the last $n-m$ variables. We assume $2\leq m\leq n-2$.
When the domain is convex, we establish a priori $L^p$ and $H^1_0$ bounds for each dimension $n$,
with $p=\infty$ when $n\leq7$. These estimates lead to the boundedness of the extremal solution
of $-\Delta u=\lambda f(u)$ in every convex domain of double revolution when $n\leq7$.
The boundedness of extremal solutions is known when $n\leq3$ for any domain $\Omega$,
in dimension $n=4$ when the domain is convex, and in dimensions $5\leq n\leq9$ in the radial case.
Except for the radial case, our result is the first partial answer valid for all nonlinearities $f$ 
in dimensions $5\leq n\leq 9$.
\end{abstract}

\maketitle

\section{Introduction and results} \label{intro}

Let $\Omega \subset\mathbb R^n$ be a smooth and bounded domain, and consider the problem
\begin{equation}\label{pb}\left\{ \begin{array}{rcll} -\Delta u &=&\lambda f(u)&\textrm{in }\Omega \\
u&>&0&\textrm{in }\Omega\\
u&=&0&\textrm{on }\partial\Omega,\end{array}\right.\end{equation}
where $\lambda$ is a positive parameter and the nonlinearity $f:[0,\infty)\longrightarrow\mathbb R$
satisfies
\begin{equation}\label{condicions}f\textrm{ is } C^{1},
\textrm{ nondecreasing},\ f(0)>0,\ \textrm{and}\ \lim_{\tau\rightarrow\infty}
\frac{f(\tau)}{\tau}=\infty.\end{equation}

It is well known (see the excellent monograph \cite{D} and references therein) that
there exists an extremal parameter $\lambda^*\in(0,\infty)$ such that if $0<\lambda<\lambda^*$
then problem \eqref{pb} admits a minimal classical solution $u_\lambda$,
while for $\lambda>\lambda^*$ it has no solution, even in the weak sense.
Here, minimal means smallest. Moreover, the set $\{u_\lambda:0<\lambda<\lambda^*\}$
is increasing in $\lambda$, and its pointwise limit
$u^*=\lim_{\lambda\rightarrow \lambda^*}u_\lambda$ is a weak solution of problem \eqref{pb}
with $\lambda=\lambda^*$. It is called the extremal solution of \eqref{pb}.

When $f(u)=e^u$, it is well known that $u\in L^\infty(\Omega)$ if $n\leq9$, while
$u^*(x)=\log\frac{1}{|x|^2}$ if $n\geq10$ and $\Omega=B_1$. An analogous result holds
for $f(u)=(1+u)^p$, $p>1$. In the nineties H. Brezis and J.L. V\'azquez \cite{BV} raised the question
of determining the regularity of $u^*$, depending on the dimension $n$, for general convex
nonlinearities satisfying \eqref{condicions}. The first general results were proved by
G. Nedev \cite{N,N2} ---see \cite{CS} for the statement and proofs of the results of \cite{N2}.

\begin{thm}[\cite{N},\cite{N2}]\label{thnedev} Let $\Omega$ be a smooth bounded domain,
$f$ be a function satisfying \eqref{condicions} which in addition is convex, and $u^*$ be the extremal solution
of \eqref{pb}. \begin{itemize}
\item[i)] If $n\leq3$, then $u^*\in L^\infty(\Omega)$.
\item[ii)] If $n\geq4$, then $u^*\in L^p(\Omega)$ for every $p<\frac{n}{n-4}$.
\item[iii)] Assume either that $n\leq5$ or that $\Omega$ is strictly convex. Then $u^*\in H^1_0(\Omega)$.
\end{itemize}
\end{thm}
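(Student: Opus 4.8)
The plan is to test the stability inequality for $u^*$ against a well-chosen function of $u^*$, and then combine the resulting estimate with an energy identity obtained by multiplying the equation by another power-type function of $u^*$. First I would recall that, since $u^*$ is the extremal solution of a convex nonlinearity, it satisfies the semi-stability inequality
\begin{equation}\label{stabproposal}
\int_\Omega f'(u^*)\,\xi^2\,dx \leq \int_\Omega |\nabla \xi|^2\,dx \qquad \textrm{for all } \xi \in C^\infty_c(\Omega),
\end{equation}
together with the equation $-\Delta u^* = \lambda^* f(u^*)$ in the weak sense. The idea is to plug $\xi = g(u^*)$ into \eqref{stabproposal} for a suitable increasing $g$, and to multiply the equation by $h(u^*)$ for a suitable $h$; after integration by parts, the left-hand side of \eqref{stabproposal} controls $\int f'(u^*) g(u^*)^2$, while the energy identity reads $\int \lambda^* f(u^*) h(u^*) = \int h'(u^*) |\nabla u^*|^2$. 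Choosing $g$ and $h$ related by $h' = (g')^2$ makes the gradient terms cancel, leaving an inequality of the form $\int \lambda^* f(u^*)\, h(u^*)\,dx \leq \int f'(u^*)\, g(u^*)^2\,dx$.

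Next I would exploit convexity of $f$. Since $f$ is convex, nondecreasing, and $f(0)>0$, one has the elementary bound $f'(t)\,t \leq f(t)\,\big(f(t)/f(0) - 1\big)$ or, more usefully, estimates comparing $f'(t) g(t)^2$ with $f(t) h(t)$ for the chosen profiles; Nedev's choice is essentially $g(t) = \int_0^t \sqrt{f'(s)}\,\sqrt{(f(s)-f(0))/f(s)}\;ds$ type expressions, but for the purpose of this sketch the key point is that with $g(t) \sim f(t)^{1/2} \cdot (\textrm{slowly varying})$ one gets, after absorbing the convexity inequality $f'(t)(f(t)-f(0)) \leq f(t) f'(t)$ and integrating by parts once more, a bound
\begin{equation}\label{keyestproposal}
\int_\Omega f(u^*)^{2}\,dx \leq C\Big(1 + \int_\Omega f(u^*)\,dx\Big),
\end{equation}
where $C$ depends only on $f$, $\lambda^*$, and $\Omega$. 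Since $\int_\Omega f(u^*)\,dx < \infty$ (as $u^*$ is a weak solution), \eqref{keyestproposal} gives $f(u^*) \in L^2(\Omega)$. By elliptic regularity applied to $-\Delta u^* = \lambda^* f(u^*)$, this yields $u^* \in W^{2,2}(\Omega) \hookrightarrow L^{q}(\Omega)$ with $q = 2n/(n-4)$ for $n>4$ (and $u^*\in L^\infty$ for $n\leq 3$, with $n=4$ giving every finite $q$); bootstrapping once — using that the superlinearity of $f$ only costs an $\varepsilon$ in the exponent via the growth assumption $f(\tau)/\tau \to \infty$, which must be handled with a Gronwall/iteration argument controlling $f(u^*)$ in terms of $u^*$ — upgrades this to $u^*\in L^p$ for every $p < n/(n-4)$, proving (i) and (ii). For (iii), I would instead test \eqref{stabproposal} with $\xi$ a function that behaves linearly (so $g(t)\sim t$ near infinity), which directly produces $\int_\Omega f'(u^*)(u^*)^2 \leq C$ and hence, via the equation, $\int_\Omega \lambda^* f(u^*)\,u^*\,dx = \int_\Omega |\nabla u^*|^2\,dx < \infty$; the convexity/monotonicity of $f$ plus a Hardy-type inequality (this is where strict convexity of $\Omega$, or the dimensional restriction $n\leq 5$, enters, to control the boundary contribution when integrating by parts) closes the $H^1_0$ estimate.

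The main obstacle I expect is the bootstrap step in (ii): because $f$ is merely superlinear with no power growth, one cannot directly iterate elliptic estimates on $f(u^*)$ as one would for $f(u)=(1+u)^p$. The fix is to combine \eqref{keyestproposal}-type bounds with the stability inequality tested against $g(u^*) = \Gamma(u^*)$ for a general increasing $\Gamma$, optimize over $\Gamma$, and derive a differential inequality for the distribution function of $u^*$; this is the technically delicate heart of Nedev's argument. A secondary difficulty is the integration-by-parts justification — $u^*$ is only a weak ($W^{1,2}_{\mathrm{loc}}$ with $f(u^*)\in L^1$) solution, so all the above manipulations should first be carried out on the smooth minimal solutions $u_\lambda$ for $\lambda < \lambda^*$, with constants uniform in $\lambda$, and then one passes to the limit $\lambda \uparrow \lambda^*$ using monotone convergence. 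The convexity of the domain in part (iii) for $n \geq 6$ is used precisely to discard a nonnegative-curvature boundary term (via the Pohozaev-type identity or a direct computation of $\Delta(|\nabla u|^2)$ near $\partial\Omega$) that cannot otherwise be absorbed.
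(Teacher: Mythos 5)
This theorem is not proved in the present paper: it is stated for background and attributed to Nedev \cite{N,N2}, with the introduction explicitly referring to \cite{CS} for the statement and proofs of the results of the unpublished \cite{N2}. There is therefore no proof in this paper against which to compare your sketch.

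That said, your outline does capture the broad strategy of Nedev's argument---test the stability inequality with $\xi=g(u)$, multiply the equation by $h(u)$, couple the two through $h'=(g')^2$, and exploit the convexity of $f$---and your remark that everything should first be carried out on the classical minimal solutions $u_\lambda$ and then passed to the limit is the right way to handle the limited regularity of $u^*$. Some steps are nevertheless off. The estimate you propose amounts to $f(u^*)\in L^2(\Omega)$, which is stronger than what Nedev proves: it would give $u^*\in W^{2,2}\hookrightarrow L^{2n/(n-4)}$, strictly better than part (ii). Nedev's actual estimate is $f(u^*)\in L^p$, equivalently $u^*\in W^{2,p}$, for every $p<\tfrac{n}{n-2}$ (this is precisely the version quoted and used in Section~3 of this paper), and the endpoint exponent is \emph{not} attained; reaching exponents close to $\tfrac{n}{n-2}$ is exactly the delicate step you flag, and it is not obtained from the heuristic ``$f'(t)t\leq f(t)(f(t)/f(0)-1)$''. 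Also, part (iii) for $n\leq5$ needs no Hardy inequality: it follows from $u^*\in L^p$ for $p<\tfrac{n}{n-4}$, $f(u^*)\in L^q$ for $q<\tfrac{n}{n-2}$, and the energy identity $\int_\Omega|\nabla u_\lambda|^2=\lambda\int_\Omega f(u_\lambda)u_\lambda$ via H\"older, the exponents matching precisely when $n\leq5$. The strictly convex case does rest on boundary estimates, in the spirit of what you describe and of Theorem~\ref{boundary} as used elsewhere in this paper.
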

In 2006, the first author and A. Capella \cite{CC} studied the radial case.
Their result establishes optimal $L^\infty$ and $L^p$ regularity results in every dimension for general $f$.
\begin{thm}[\cite{CC}]\label{thradial} Let $\Omega=B_1$ be the unit ball in $\mathbb R^n$,
$f$ be a function satisfying \eqref{condicions}, and $u^*$ be the extremal solution of \eqref{pb}.
\begin{itemize}
\item[i)] If $n\leq9$, then $u^*\in L^\infty(\Omega)$.
\item[ii)] If $n\geq10$, then $u^*\in L^p(\Omega)$ for every $p<p_n$, where
\begin{equation}\label{radexp}p_n=2+\frac{4}{\frac{n}{2+\sqrt{n-1}}-2}.\end{equation}
\item[iii)] For every dimension $n$,  $u^*\in H^3(\Omega)$.
\end{itemize}
\end{thm}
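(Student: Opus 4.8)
The plan is to prove a priori estimates for the minimal solutions $u_\lambda$, $0<\lambda<\lambda^*$, that are uniform in $\lambda$, and then let $\lambda\uparrow\lambda^*$ so that $u_\lambda\uparrow u^*$ inherits them. \textbf{Step 1 (reduction and a fundamental inequality).} By minimality and the rotational invariance of $B_1$, each $u_\lambda$ is radial; from $(r^{n-1}(u_\lambda)_r)'=-\lambda r^{n-1}f(u_\lambda)<0$ together with $(u_\lambda)_r(0)=0$ it follows that $u_\lambda$ is radially decreasing, so $w:=(u_\lambda)_r<0$ on $(0,1]$. Being minimal, $u_\lambda$ is semi-stable:
\[
\lambda\int_{B_1}f'(u_\lambda)\,\xi^2\,dx\ \le\ \int_{B_1}|\nabla\xi|^2\,dx\qquad\text{for all }\xi\in H^1_0(B_1).
\]
Differentiating the radial equation shows that $w$ solves $-\Delta w+\tfrac{n-1}{r^2}\,w=\lambda f'(u_\lambda)\,w$ on $(0,1)$. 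I would then insert $\xi=c(r)\,w$ into the stability inequality, with $c$ Lipschitz on $[0,1]$ and $c(1)=0$ (a cut-off near $r=1$ is needed because $w(1)\ne0$). Multiplying the equation for $w$ by $c^2w$, integrating by parts (the boundary terms vanish, at $r=1$ since $c(1)=0$ and at $r=0$ since $w(0)=0$), and using the identity $\nabla w\cdot\nabla(c^2w)=|\nabla(cw)|^2-|\nabla c|^2\,w^2$, one cancels $\int|\nabla(cw)|^2$ against the stability inequality and is left with the fundamental weighted estimate
\[
(n-1)\int_0^1 c(r)^2\,(u_\lambda)_r^2\,r^{n-3}\,dr\ \le\ \int_0^1 c'(r)^2\,(u_\lambda)_r^2\,r^{n-1}\,dr .
\]

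\textbf{Step 2 (weighted $L^2$ control of $(u_\lambda)_r$).} Fix $0\le a<\sqrt{n-1}$ and take $c(r)=r^{-a}$ on $(0,\tfrac12)$, smoothly cut off to $0$ on $(\tfrac12,1)$. On $(0,\tfrac12)$ one has $c^2r^{n-3}=r^{n-3-2a}=a^{-2}(c')^2r^{n-1}$, so the fundamental estimate, after discarding a nonnegative term, gives
\[
(n-1-a^2)\int_0^{1/2}(u_\lambda)_r^2\,r^{n-3-2a}\,dr\ \le\ C_a\int_{1/2}^1(u_\lambda)_r^2\,r^{n-1}\,dr\ \le\ C_a\,\|u_\lambda\|_{H^1_0(B_1)}^2 .
\]
Since $n-1-a^2>0$, and since $\|u_\lambda\|_{H^1_0(B_1)}$ is bounded uniformly in $\lambda$ (the standard a priori bound for minimal solutions), letting $a\uparrow\sqrt{n-1}$ yields
\[
\int_0^{1/2}(u_\lambda)_r^2\,r^{\gamma}\,dr\ \le\ C\qquad\text{uniformly in }\lambda,\ \text{for every }\gamma>n-3-2\sqrt{n-1}.
\]

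\textbf{Step 3 (from $(u_\lambda)_r$ to $u_\lambda$).} For $r\le\tfrac12$ write $u_\lambda(r)=u_\lambda(\tfrac12)+\int_r^{1/2}(-(u_\lambda)_s)\,ds$; the first term is $\le C\|u_\lambda\|_{H^1_0}$, and by Cauchy--Schwarz $\int_r^{1/2}(-(u_\lambda)_s)\,ds\le\big(\int_r^{1/2}(u_\lambda)_s^2\,s^{\gamma}\,ds\big)^{1/2}\big(\int_r^{1/2}s^{-\gamma}\,ds\big)^{1/2}$. If $n\le9$, the admissible range contains some $\gamma<1$ (because $n-3-2\sqrt{n-1}<1$ for every $n\le9$), hence $\int_0^{1/2}s^{-\gamma}\,ds<\infty$ and $\|u_\lambda\|_{L^\infty(B_1)}\le C$; passing to the limit gives $u^*\in L^\infty(\Omega)$, which is (i). If $n\ge10$, choosing $\gamma=n-3-2\sqrt{n-1}+\varepsilon>1$ gives $u_\lambda(r)\le C\,r^{-(\gamma-1)/2}$, so $\int_0^1 u_\lambda^p\,r^{n-1}\,dr$ is finite, uniformly in $\lambda$, whenever $p<\tfrac{2n}{\,n-4-2\sqrt{n-1}+\varepsilon\,}$; letting $\varepsilon\to0$ and using the elementary identity $\tfrac{2n}{\,n-4-2\sqrt{n-1}\,}=p_n$ proves (ii). Finally, for (iii) I would run the same scheme with the admissible exponent $a=1$ (legitimate when $n\ge3$; the cases $n\le2$ are already covered by (i) and elliptic regularity) to obtain $\int_0^{1/2}(u_\lambda)_r^2\,r^{n-5}\,dr\le C$ uniformly in $\lambda$, and then combine this with the equation $u_{rr}=-\lambda f(u_\lambda)-\tfrac{n-1}{r}(u_\lambda)_r$ and standard $L^2$-elliptic estimates (the most singular ingredient of the radial $H^3$-norm being exactly $\int(u_\lambda)_r^2\,r^{n-5}\,dr$) to conclude $\|u_\lambda\|_{H^3(B_1)}\le C$, hence $u^*\in H^3$.

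\textbf{Main obstacle.} The crux is Step 1: one must differentiate the equation to discover that $w=(u_\lambda)_r$ solves the linearized equation perturbed by the Hardy-type potential $\tfrac{n-1}{r^2}$ --- it is this potential that drives the whole argument --- and then carry out the integration by parts so that the bad term $\int|\nabla(cw)|^2$ cancels exactly. Once that identity is in hand, the optimal range of weights is governed solely by the positivity $n-1-a^2>0$ for $c=r^{-a}$, i.e.\ $a<\sqrt{n-1}$; this threshold is the source both of the critical dimension $9$ in (i) and of the exponent $p_n$ in (ii). The remaining ingredients are routine but must be handled with care: the admissibility of $\xi=cw$ (the cut-off near $r=1$, passed to the limit), the uniform $H^1_0$ (and, for (iii), higher-order) a priori bounds for minimal solutions, and verifying the arithmetic identity $\tfrac{2n}{\,n-4-2\sqrt{n-1}\,}=p_n$.
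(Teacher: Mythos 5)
This theorem is cited from \cite{CC}; the present paper contains no proof of it, and only recalls in the introduction that the strategy of \cite{CC} is to take $\xi=u_r\eta$ in the stability quadratic form. Your proposal follows exactly that route, so there is no ``paper's proof'' to diverge from; I will assess your argument on its own.

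Parts (i) and (ii) are essentially correct and are the radial analogue of Lemmas \ref{lem1}--\ref{lema} here. Differentiating the radial equation does give the Hardy potential $(n-1)/r^2$ for $w=u_r$, plugging $\xi=cw$ into stability and cancelling $\int|\nabla(cw)|^2$ yields $(n-1)\int c^2u_r^2\,r^{n-3}dr\le\int(c')^2u_r^2\,r^{n-1}dr$, and $c=r^{-a}$ with $a<\sqrt{n-1}$ gives the weighted bound; the threshold $n-3-2\sqrt{n-1}<1\Leftrightarrow n\le9$ and the identity $2n/(n-4-2\sqrt{n-1})=p_n$ both check out. Two points are glossed over but fixable: (a) admissibility of the singular test function $\xi=r^{-a}w\rho$ near $r=0$, which should be handled with a cutoff $\zeta_\epsilon$ near the origin and a limit, exactly as in the proof of Lemma \ref{lema}; (b) the uniform bound on the right-hand side $\int_{1/2}^1u_r^2r^{n-1}dr$ should be obtained from boundary estimates on the annulus (Theorem \ref{boundary}, valid since $B_1$ is convex), rather than by asserting a uniform global $H^1_0$ bound for arbitrary $f$, which is itself a nontrivial assertion.

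Part (iii) does not close as written. Taking $a=1$ controls $\int u_r^2\,r^{n-5}dr$, the worst term in the radial $H^3$-seminorm, but ``standard $L^2$-elliptic estimates'' do not then conclude. From $u_{rr}=-\lambda f(u)-\tfrac{n-1}{r}u_r$ one reduces $\int u_{rr}^2r^{n-3}dr$ to $\int f(u)^2r^{n-3}dr+\int u_r^2r^{n-5}dr$, and differentiating once more reduces $\int u_{rrr}^2r^{n-1}dr$ to $\int f'(u)^2u_r^2r^{n-1}dr$ plus terms already controlled. Neither $\int f(u)^2r^{n-3}dr$ nor $\int f'(u)^2u_r^2r^{n-1}dr$ follows from your Step~2 (and trying to bound $f(u)^2$ via the equation is circular). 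For $n\le9$ you have $u\in L^\infty$, so $f(u)$ and $f'(u)$ are bounded and the argument does go through; but for $n\ge10$, where no $L^\infty$ bound is available, controlling these two integrals is precisely the nontrivial content of the $H^3$ statement in \cite{CC}, and your sketch leaves this gap open.
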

The best known result was established in 2010 by the first author \cite{C4} and establishes
the boundedness of $u^*$ in convex domains in dimension $n=4$. Related ideas recently allowed
the first author
and M. Sanch\'on \cite{CS} to improve Nedev's $L^p$ estimates of Theorem \ref{thnedev}
when $n\geq5$:
\begin{thm}[\cite{C4},\cite{CS}]\label{thcabre} Let $\Omega\subset \mathbb R^n$ be a convex,
smooth and bounded domain, $f$ be a function satisfying \eqref{condicions}, and $u^*$ be the
extremal solution of \eqref{pb}.\begin{itemize}
\item[i)] If $n\leq4$, then $u^*\in L^\infty(\Omega)$.
\item[ii)] If $n\geq5$, then $u^*\in L^p(\Omega)$ for every $p<\frac{2n}{n-4}=2+\frac{4}{\frac n2-2}$.
\end{itemize}
\end{thm}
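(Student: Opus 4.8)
\medskip

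The plan is to prove the stated bounds for the minimal solutions $u_\lambda$, $0<\lambda<\lambda^*$ --- which are classical and \emph{semi-stable}, i.e.\ satisfy $\int_\Omega|\nabla\xi|^2\,dx\ge\lambda\int_\Omega f'(u_\lambda)\,\xi^2\,dx$ for all $\xi\in H^1_0(\Omega)$ --- with constants depending only on $n$, $\Omega$ and $f$. Since $u_\lambda\uparrow u^*$ pointwise as $\lambda\uparrow\lambda^*$, monotone convergence then transfers these bounds to $u^*$; in the $L^\infty$ case, bootstrapping $W^{2,p}$ and Schauder estimates further gives $u^*\in C^2(\overline\Omega)$. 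So from now on $u=u_\lambda$ is fixed and, after rescaling, $\lambda=1$; the target is a bound on $\|f(u)\|_{L^q(\Omega)}$ for a suitable $q$.

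The engine of the proof is an \emph{improved stability inequality} coming from the Bochner formula. Differentiating $-\Delta u=f(u)$, each $\partial_i u$ solves the linearized equation, and in $\{\nabla u\neq0\}$ one has
\[
-\Delta|\nabla u|\ =\ f'(u)\,|\nabla u|\ -\ \frac{|D^2u|^2-\bigl|\nabla|\nabla u|\bigr|^2}{|\nabla u|},\qquad\text{with}\quad |D^2u|^2-\bigl|\nabla|\nabla u|\bigr|^2\ \ge\ 0 .
\]
Inserting $\xi=|\nabla u|\,\varphi$ in the semi-stability inequality and integrating $\int_\Omega\varphi^2\bigl|\nabla|\nabla u|\bigr|^2$ by parts against this identity, the two copies of $\int_\Omega f'(u)|\nabla u|^2\varphi^2$ cancel, and for $\varphi\in C^{0,1}_c(\Omega)$ one obtains
\[
\int_\Omega\Bigl(|D^2u|^2-\bigl|\nabla|\nabla u|\bigr|^2\Bigr)\varphi^2\,dx\ \le\ \int_\Omega|\nabla u|^2|\nabla\varphi|^2\,dx .
\]
(The singularity of $|\nabla u|$ along its critical set is handled by the regularization $\sqrt{|\nabla u|^2+\varepsilon}$.) This interior inequality is strictly stronger than the ingredient behind Nedev's estimate, which is why it will push the $L^\infty$ threshold from $n\le3$ to $n\le4$.

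Convexity of $\Omega$ enters when one turns this into a \emph{global} estimate. The moving planes method in the convex domain $\Omega$, together with the a priori bound $\int_\Omega f(u_\lambda)\,\mathrm{dist}(\cdot,\partial\Omega)\,dx\le C$ that follows from the superlinearity of $f$ (test the equation against the first Dirichlet eigenfunction), yields uniform-in-$\lambda$ bounds on $u_\lambda$ in a fixed neighborhood of $\partial\Omega$; hence any blow-up of $u^*$ is interior, cutting off with $\varphi$ supported away from $\partial\Omega$ is harmless, and the remaining lower-order, distance-weighted terms are absorbed via the convex-domain Hardy inequality $\int_\Omega\varphi^2/\mathrm{dist}(\cdot,\partial\Omega)^2\le4\int_\Omega|\nabla\varphi|^2$. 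One then chooses $\varphi=g(u)$ with $g$ increasing (in practice $g(u)=u^{\beta}$, truncated to stay Lipschitz): integrating the left-hand side by parts with $\partial_j\Delta u=-f'(u)\partial_ju$ and then $-\Delta u=f(u)$ produces a term comparable to $\int_\Omega f(u)^2 g(u)^2\,dx$ up to gradient remainders, while $\int_\Omega|\nabla u|^4 g'(u)^2\,dx$ and those remainders are reabsorbed by further integrations by parts against $-\Delta u=f(u)$, using $f'\ge0$ to discard cross terms of good sign. Optimizing $\beta$ and iterating with $W^{2,q}$ and Sobolev embeddings, one reaches $\|f(u)\|_{L^q(\Omega)}\le C(n,\Omega,f)$ for some $q>n/2$ when $n\le4$, so $u\in W^{2,q}(\Omega)\hookrightarrow L^\infty(\Omega)$, and for every $q<2$ when $n\ge5$, so $u\in W^{2,q}(\Omega)\hookrightarrow L^p(\Omega)$ for all $p<2n/(n-4)$.

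I expect the passage from the interior improved stability inequality to a closed global estimate to be the main obstacle, and the point where convexity is genuinely indispensable: one must control $u$ and $\nabla u$ near $\partial\Omega$ and carry the distance-weighted terms through without loss --- in contrast with the dimension $\le 3$ case, which is insensitive to the geometry of $\Omega$. A second delicate point is the exponent bookkeeping in the final step: the range of admissible powers $\beta$ narrows as $n$ grows, and the thresholds $n=4$ and $p=2n/(n-4)$ are attained with no slack, so every integration by parts must be arranged to be sharp.
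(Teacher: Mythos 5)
The theorem you were asked to prove is only quoted in this paper (from \cite{C4} and \cite{CS}); the paper itself does not reprove it. What the paper does give is a one--paragraph description of the method of \cite{C4,CS}, and a close analogue of it in the double--revolution setting (Lemma \ref{lema} and Proposition \ref{prop1}), so a meaningful comparison is still possible.

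Your starting point is exactly right and matches \cite{C4,CS}: take $\xi=|\nabla u|\varphi$ in the semi--stability quadratic form, use the Bochner--type identity for $\Delta|\nabla u|$, observe the cancellation of the two $\int f'(u)|\nabla u|^2\varphi^2$ terms, and arrive at
\[
\int_\Omega\bigl(|D^2u|^2-\bigl|\nabla|\nabla u|\bigr|^2\bigr)\varphi^2\,dx\ \le\ \int_\Omega|\nabla u|^2|\nabla\varphi|^2\,dx ,
\]
an inequality with no trace of $f$ in it --- this is precisely what the paper emphasizes when it says the quadratic form in $\eta$ ``turns out not to depend on $f$.'' Your use of convexity to get boundary control via moving planes (the de Figueiredo--Lions--Nussbaum estimate, i.e.\ the paper's Theorem \ref{boundary}) is also the correct and standard ingredient.

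The final step, however, is where your proposal departs from \cite{C4,CS} and where there is a genuine gap. You propose taking $\varphi=g(u)=u^\beta$, integrating the left side by parts to ``produce a term comparable to $\int_\Omega f(u)^2g(u)^2$,'' and then closing the argument via $\|f(u)\|_{L^q}$ bounds and $W^{2,q}\hookrightarrow L^\infty$ for $q>n/2$. Two things go wrong. First, integrating $\int|D^2u|^2\varphi^2$ by parts using $\partial_j\Delta u=-f'(u)\partial_ju$ does produce $\int f'(u)|\nabla u|^2\varphi^2$ (and, after another integration by parts, $\int f(u)^2\varphi^2$), but the companion term $-\int\bigl|\nabla|\nabla u|\bigr|^2\varphi^2$ sits on the same side of the inequality with the wrong sign, is not dominated by the right--hand side, and cannot be discarded: pointwise $|D^2u|^2-\bigl|\nabla|\nabla u|\bigr|^2$ equals $|\nabla_T|\nabla u||^2+|A|^2|\nabla u|^2$ (tangential gradient plus second fundamental form of the level set of $u$), a purely geometric quantity that is \emph{not} comparable to $f(u)^2$; the mean curvature $\mathcal H$ of the level set satisfies $\Delta u=u_{\nu\nu}+\mathcal H|\nabla u|$, so $|A|^2|\nabla u|^2\gtrsim(\Delta u-u_{\nu\nu})^2$, and the uncontrolled normal second derivative $u_{\nu\nu}$ spoils any direct passage to $\int f(u)^2\varphi^2$. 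Second, your claimed conclusion ``$f(u)\in L^q$ for some $q>n/2$ when $n\le4$'' is inconsistent with the other half of your own claim (``$q<2$ for $n\ge5$''): at $n=4$ one would need $q>2$, which is strictly more than the $q<2$ you reach, and $W^{2,2}(\Omega)\hookrightarrow L^\infty(\Omega)$ fails in $\mathbb R^4$. This is not an accident --- the $n=4$ threshold is precisely the one that a Nedev--style $\|f(u)\|_{L^q}$--then--$W^{2,q}$ route cannot reach, and it is exactly why \cite{C4} abandons that route.

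What \cite{C4,CS} actually do (and what the present paper mimics in its double--revolution Lemma \ref{lema} and Proposition \ref{prop1}) is to exploit the \emph{geometric} content of the left--hand side: interpret it via $|\nabla_T|\nabla u||^2+|A|^2|\nabla u|^2$ on the nested level sets of $u$ (which, by moving planes in a convex domain, are star--shaped about the interior maximum point), use a Michael--Simon/isoperimetric--type Sobolev inequality on those hypersurfaces, and choose the test function $\varphi$ as a function of \emph{position} (in the paper's analogue, $\eta_\epsilon=s^{-\alpha}\rho$, a truncated power of the radial variable, not a function of $u$). This yields a Morrey--type bound on $u$ itself, independent of $f$, with the dimension restriction $n\le4$ (resp.\ $p<\tfrac{2n}{n-4}$) coming from the convergence of a geometric integral, not from a Sobolev embedding threshold for $f(u)$. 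To repair your proposal you would have to replace the ``$\varphi=g(u)$ and bound $\|f(u)\|_{L^q}$'' step by this geometric level--set argument.
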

The boundedness of extremal solutions remains an open question in dimensions $5\leq n\leq 9$,
even in the case of convex domains and convex nonlinearities.

The aim of this paper is to study the regularity of the extremal solution $u^*$ of \eqref{pb}
in a class of domains that we call of double revolution. The class contains domains much
more general than balls, but is much simpler than general convex domains.
In this class of domains our main result establishes the boundedness of the extremal solution $u^*$ in dimensions 
$n\leq7$, whenever $\Omega$ is convex.
An interesting point of our work is that it has led us to a new Sobolev and isoperimetric  inequality 
(Proposition \ref{sobx^A} below) with a
monomial weight or density. In a future paper \cite{CR}, we treat a more general version of these Sobolev 
and isoperimetric inequalities with
densities (see Remark \ref{remsob} below) for which we can compute best constants, as well as extremal sets 
and functions.
They are in the spirit of recent works on manifolds with a density; see F. Morgan's survey \cite{Mo} for 
more information.

Let $n\geq4$ and
\begin{equation}\label{double}\mathbb R^n=\mathbb R^m\times \mathbb R^k\ \textrm{ with }
\ n=m+k,\ m\geq2,\ \textrm{ and }\ k\geq 2.\end{equation}
For each $x\in\mathbb R^n$ we define the variables
\[
\left\{ \begin{array}{rcl} s&=&\sqrt{x_1^2+\cdots+x_m^2}
\vspace{2mm}  \\
t&=&\sqrt{x_{m+1}^2+\cdots+x_n^2}.\end{array}\right.
\]
We say that a domain $\Omega\subset \mathbb{R}^n$ is a \emph{domain of double revolution}
if it is invariant under rotations of the first $m$ variables and also under rotations of
the last $k$ variables.
Equivalently, $\Omega$ is of the form $\Omega=\{x\in\mathbb R^n:\ (s,t)\in \Omega_2\}$ where $\Omega_2$
is a domain in $\mathbb R^2$ symmetric with respect to the two coordinate axes. In fact,
$\Omega_2=\{(y_1,y_2)\in\mathbb R^2:\ x=(x_1=y_1,x_2=0,...,x_m=0,x_{m+1}=y_2,x_{m+2}=0,...,x_n=0)\in\Omega\}$ is
the intersection of $\Omega$ with the $(x_1,x_{m+1})$-plane.
Note that $\Omega_2$ is smooth if and only if $\Omega$ is smooth.
Let us call $\widetilde\Omega$ the intersection of $\Omega_2$ with the positive quadrant of
$\mathbb R^2$, i.e.,
\begin{equation}\label{omtil}
\begin{split}
\widetilde\Omega=\big\{(s,t)\in \mathbb R^2 & :  s>0,t>0,\text{ and }\\
& \hspace{-1cm} (x_1=s,x_2=0,...,x_m=0,x_{m+1}=t,x_{m+2}=0,...,x_n=0)\in\Omega \big\}.
\end{split}
\end{equation}
Since $\{s=0\}$ and $\{t=0\}$ have zero measure in $\mathbb R^2$, we have that
\[\int_\Omega v\ dx=c_{m,k}\int_{\widetilde\Omega}v(s,t)s^{m-1}t^{k-1}dsdt\]
for every $L^1(\Omega)$ function $v=v(x)$ which depends only on the radial variables $s$ and~$t$.
Here, $c_{m,k}$ is a positive constant depending only on $m$ and $k$.

In the previous theorems, the regularity of $u^*$ is proved using its semi-stability. More precisely, the
minimal solutions $u_\lambda$ of \eqref{pb} turn out to be semi-stable solutions.
A solution is semi-stable if the second variation of energy at the solution is nonnegative; see \eqref{stcond} below.
We will prove that any semi-stable classical solution $u$ of \eqref{pb}, and more generally of
\eqref{pbl} below, depends only on $s$ and $t$, and hence we can identify it with a function
$u=u(s,t)$ defined in $(\mathbb R_+)^2=(0,\infty)^2$ which satisfies the equation
\begin{equation}\label{eqst} u_{ss}+u_{tt}+\frac{m-1}{s}u_s+\frac{k-1}{t}u_t+f(u)=0\ \ 
\textrm{ for } (s,t)\in\widetilde\Omega.\end{equation}
Moreover, in the case of convex domains we will also have $u_s\leq0$ and $u_t\leq0$ (for $s>0$, $t>0$) and hence,
$u(0)=\|u\|_{L^\infty}$ (see Remark \ref{symm}).

The following is our main result. We prove that, in convex domains of double revolution,
the extremal solution $u^*$ is bounded when $n\leq7$, and it belongs to $H^1_0$ and certain
$L^p$ spaces when $n\geq8$. We also prove that in dimension $n=4$ the convexity of the domain
is not required for the boundedness of $u^*$ (in \cite{C4}, convexity of $\Omega$ was a requirement 
in general domains of $\mathbb R^4$).
\begin{thm}\label{th1} Assume \eqref{double}. Let $\Omega\subset \mathbb{R}^n$ be a smooth and
bounded domain of double revolution, $f$ be a function satisfying \eqref{condicions}, and
$u^*$ be the extremal solution of \eqref{pb}.
\begin{itemize}
\item[a)] Assume either that $n=4$ or that $n\leq7$ and $\Omega$ is convex. Then,
$u^*\in L^{\infty}(\Omega)$.
\item[b)] If $n\geq8$ and $\Omega$ is convex, then $u^*\in L^p(\Omega)$ for all $p<p_{m,k}$,
where
\begin{equation}\label{pmk}p_{m,k}=2+\frac{4}{\frac{m}{2+\sqrt{m-1}}+\frac{k}{2+\sqrt{k-1}}-2}.
\end{equation}
\item[c)] Assume either that $n\leq6$ or that $\Omega$ is convex. Then, $u^*\in H^1_0(\Omega)$.
\end{itemize}
\end{thm}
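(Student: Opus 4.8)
\emph{Strategy.} The plan is to prove the stated $L^\infty$, $L^p$ and $H^1_0$ bounds for the minimal solutions $u_\lambda$ of \eqref{pb} with a constant independent of $\lambda\in(0,\lambda^*)$, and then let $\lambda\uparrow\lambda^*$: since $u_\lambda\uparrow u^*$, the same bounds hold for $u^*$. Each $u_\lambda$ is semi-stable, so by the symmetrization statement announced above it may be viewed as a function $u=u(s,t)$ on $\widetilde\Omega$ solving \eqref{eqst}, with $u_s\le0$, $u_t\le0$ (hence $u(0,0)=\|u\|_{L^\infty}$) when $\Omega$ is convex; we abbreviate $d\mu=s^{m-1}t^{k-1}\,ds\,dt$, so that $\int_\Omega v\,dx=c_{m,k}\int_{\widetilde\Omega}v\,d\mu$ for $v=v(s,t)$ and $(\widetilde\Omega,d\mu)$ has homogeneous dimension $n=m+k$. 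A first step is to localize the possible singularity of $u$ at the origin $s=t=0$. When $\Omega$ is convex this follows from the method of moving planes, which gives a uniform-in-$\lambda$ bound for $u_\lambda$ and $|\nabla u_\lambda|$ in a fixed neighborhood of $\partial\Omega$ (as in \cite{C4}). For the cases where convexity is not assumed --- part a) with $n=4$, part c) with $n\le6$ --- one uses instead that a neighborhood of any point of $\widetilde\Omega$ other than the origin (including points on the axes $\{s=0\}$ or $\{t=0\}$) carries $u$ as a semi-stable solution of an equation rotationally symmetric in only part of the variables, together with the $L^p$-bounds already available from \cite{N}; either way the problem reduces to a uniform estimate near the interior point $s=t=0$.

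\emph{The core inequality.} We extract from semi-stability an anisotropic Hardy-type inequality. Testing the stability inequality $\int_\Omega f'(u)\xi^2\,dx\le\int_\Omega|\nabla\xi|^2\,dx$ with $\xi=|\nabla u|\,\eta$ for $\eta=\eta(s,t)$ smooth and supported in the interior of $\Omega$, and using the Sternberg--Zumbrun identity for $\Delta|\nabla u|$ on $\{\nabla u\neq0\}$, one gets
\[
\int_\Omega\big(|\nabla_T|\nabla u||^2+|A|^2|\nabla u|^2\big)\eta^2\,dx\ \le\ \int_\Omega|\nabla u|^2|\nabla\eta|^2\,dx,
\]
with $A$, $\nabla_T$ the second fundamental form and tangential gradient of the level sets of $u$. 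The special feature of domains of double revolution is that the level sets $\{u=c\}$ are hypersurfaces of double revolution: their principal curvatures are one planar curvature together with $m-1$ copies of $-u_s/(s|\nabla u|)$ and $k-1$ copies of $-u_t/(t|\nabla u|)$, whence $|A|^2|\nabla u|^2\ge(m-1)u_s^2/s^2+(k-1)u_t^2/t^2$ (and also $|A|^2\ge H^2/(n-1)$, $H$ the mean curvature). In $(s,t)$ variables this yields, for admissible $\eta$,
\[
(m-1)\int_{\widetilde\Omega}\frac{u_s^2}{s^2}\,\eta^2\,d\mu+(k-1)\int_{\widetilde\Omega}\frac{u_t^2}{t^2}\,\eta^2\,d\mu\ \le\ \int_{\widetilde\Omega}\big(u_s^2+u_t^2\big)|\nabla\eta|^2\,d\mu .
\]
The same inequality also follows by testing stability with $\eta\,u_s$ and with $\eta\,u_t$ and using the linearized equations for $u_s$ and $u_t$, which carry exactly the potentials $(m-1)/s^2$ and $(k-1)/t^2$; this route avoids assuming $\nabla u\neq0$. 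When $\Omega$ is convex one may take $\eta$ not vanishing on $\partial\Omega$, the extra boundary term being favorable by convexity of $\partial\Omega$.

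\emph{Choice of weight and conclusion.} Any $\eta$ with $|\nabla\log\eta|^2\le(1-\delta)\big((m-1)/s^2+(k-1)/t^2\big)$ near the origin makes the right-hand side above absorbable and gives $\int_{\widetilde\Omega\cap B_\rho}\big(u_s^2/s^2+u_t^2/t^2\big)\eta^2\,d\mu<\infty$. With $V(\theta)=(m-1)/\cos^2\theta+(k-1)/\sin^2\theta$ in polar coordinates $(s,t)=r(\cos\theta,\sin\theta)$, the isotropic choice $\eta=r^{-\sqrt{m-1}-\sqrt{k-1}+\varepsilon}$ is admissible because $\min_\theta V(\theta)=(\sqrt{m-1}+\sqrt{k-1})^2$; but $V$ is much larger near the axes, so taking $\eta$ anisotropic --- a power of $s$ times a power of $t$, tuned region by region --- gives a whole family of weighted estimates for $\nabla u$. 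Feeding these into the coarea formula for $u$ (whose superlevel sets, by monotonicity, are decreasing regions near the origin) together with the Sobolev and isoperimetric inequalities with the monomial weight $s^{m-1}t^{k-1}$ announced above, and optimizing, one obtains $u\in L^p(d\mu)=L^p(\Omega)$ for every $p<p_{m,k}$; in this optimization the Hardy gains $\sqrt{m-1},\sqrt{k-1}$ get weighted against the density exponents $m-1,k-1$ and the dimension $n$, producing the weighted harmonic mean $\Sigma:=\frac{m}{2+\sqrt{m-1}}+\frac{k}{2+\sqrt{k-1}}$, and $u\in L^\infty$ precisely when $p_{m,k}=\infty$, i.e. when $\Sigma<2$ --- which one checks holds for every splitting $m+k=n$ with $n\le7$. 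This gives a) and b). For c), $u^*\in H^1_0(\Omega)$ amounts to a uniform bound on $\int_\Omega|\nabla u_\lambda|^2\,dx=\lambda\int_\Omega u_\lambda f(u_\lambda)\,dx$; when $n\le6$ this follows from a Nedev-type argument (stability tested with a function of $u$, integrated by parts in $(\widetilde\Omega,d\mu)$, whose homogeneous dimension $n$ is small enough), and when $\Omega$ is convex it follows from b) or from the favorable boundary term.

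\emph{Main obstacle.} The delicate point is the optimization in the third step: recovering the \emph{sharp} exponent $p_{m,k}$ --- rather than the cruder $2+2\sqrt{\min(m,k)-1}$ that a single power of $r$ would yield --- requires choosing the correct two-parameter family of anisotropic weights $\eta$ and tracking precisely how the anisotropic Hardy gains interact with the density $s^{m-1}t^{k-1}$ through the weighted Sobolev/isoperimetric inequalities; establishing those weighted inequalities is itself part of the work. A secondary technical issue is to justify the geometric computation at critical points of $u$ and on the coordinate axes, and to carry the boundary terms correctly in the convex case.
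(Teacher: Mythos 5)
Your high-level strategy matches the paper: reduce to semi-stable $u_\lambda$, use the symmetry to view $u=u(s,t)$, test stability with $\xi=u_s\eta$ and $\xi=u_t\eta$ to extract Hardy gains of order $\sqrt{m-1}$ and $\sqrt{k-1}$, localize the possible singularity at the origin via moving-plane boundary estimates in the convex case and via Nedev's estimates otherwise, and finally invoke a weighted Sobolev inequality. You also correctly identify the threshold $\frac{m}{2+\sqrt{m-1}}+\frac{k}{2+\sqrt{k-1}}<2$. However, the heart of the matter --- turning the two one-parameter families of Hardy estimates into the sharp exponent $p_{m,k}$ --- is precisely the step you flag as the ``main obstacle,'' and your proposed route there does not go through.

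There are two concrete issues. First, the combined inequality
\[
(m-1)\int\frac{u_s^2}{s^2}\,\eta^2\,d\mu+(k-1)\int\frac{u_t^2}{t^2}\,\eta^2\,d\mu\ \le\ \int(u_s^2+u_t^2)|\nabla\eta|^2\,d\mu
\]
with a \emph{single} anisotropic weight $\eta=s^{-\alpha}t^{-\beta}$ produces uncontrollable cross terms $u_s^2\,\beta^2/t^2$ and $u_t^2\,\alpha^2/s^2$ on the right-hand side, so that nothing can be absorbed. What is actually used is the pair of \emph{separate} inequalities obtained by testing with $\xi=u_s\eta$ and $\xi=u_t\eta$ individually, taking $\eta=s^{-\alpha}\rho$ for the first and $\eta=t^{-\beta}\rho$ for the second; this yields $\int_\Omega u_s^2 s^{-2\alpha-2}\,dx\le C$ and $\int_\Omega u_t^2 t^{-2\beta-2}\,dx\le C$ for all $\alpha<\sqrt{m-1}$, $\beta<\sqrt{k-1}$. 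Second, the sharp exponent does \emph{not} come from applying a weighted Sobolev inequality with the original density $s^{m-1}t^{k-1}$ in the $(s,t)$ variables, as you suggest. The paper's key idea --- absent from your proposal --- is the change of variables $\sigma=s^{2+\alpha}$, $\tau=t^{2+\beta}$, which converts the two Hardy-weighted gradient estimates into a \emph{single} $H^1$-type bound $\int(u_\sigma^2+u_\tau^2)\,\sigma^a\tau^b\,d\sigma d\tau\le C$ with $a=\frac{m}{2+\alpha}-1$, $b=\frac{k}{2+\beta}-1$. Only in these new coordinates does the weighted Sobolev inequality (Proposition \ref{sobx^A}, with the transformed monomial weight $\sigma^a\tau^b$, not $s^{m-1}t^{k-1}$) yield $L^{p}$ with $p=2+\frac{4}{D-2}$, $D=a+b+2=\frac{m}{2+\alpha}+\frac{k}{2+\beta}$, and optimizing $\alpha\uparrow\sqrt{m-1}$, $\beta\uparrow\sqrt{k-1}$ gives $p_{m,k}$. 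For part a) the paper in fact bypasses Sobolev altogether: once $D<2$, it bounds $u(0)$ directly by integrating $|\nabla_{(\sigma,\tau)}u|$ along rays to the origin in the $(\sigma,\tau)$-plane and using Cauchy--Schwarz. Your part c) in the nonconvex case is also off: the paper does not use a Nedev-type stability test there, but rather Nedev's $W^{2,p}$ estimates combined with the Sobolev inequality in dimensions $m+1$ and $k+1$ applied in the two regions $\{s>t\}$, $\{s<t\}$ near $\partial\Omega$. In short, the skeleton is right, but without the $\sigma=s^{2+\alpha}$, $\tau=t^{2+\beta}$ rescaling (and the separate testing it relies on) the proposal does not reach $p_{m,k}$.
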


\begin{rem}\label{qmkconvex} Let $q_{m,k}=\frac{m}{2+\sqrt{m-1}}+\frac{k}{2+\sqrt{k-1}}$.
Since $q(x):=\frac{x}{2+\sqrt{x-1}}$ is a concave function in $[2,\infty)$, we have 
$q'(x)-q'(n-x)\geq0$ in $[2,\frac n2]$, and thus $q(x)+q(n-x)$ is nondecreasing in $[2,\frac n2]$. 
Hence, $q_{2,n-2}\leq q_{m,k}
\leq q_{\frac n2,\frac n2}$, and therefore $p_{\frac n2,\frac n2}\leq p_{m,k}\leq p_{2,n-2}$.
Thus, asymptotically as $n\rightarrow\infty$,
\[2+\frac{2\sqrt2}{\sqrt n}\simeq p_{\frac{n}{2},\frac{n}{2}}\leq p_{m,k}\leq p_{2,n-2}\simeq
2+\frac{4}{\sqrt n}.\]
Instead, in a general convex domain, $L^p$ estimates are only known for $p\simeq 2+\frac 8 n$
(see Theorem \ref{thcabre} ii above), while in the radial case one has $L^p$ estimates for
$p\simeq 2+\frac{4}{\sqrt n}$ (see Theorem \ref{thradial} ii).
\end{rem}

The proofs of the results in \cite{N,N2,CC,C4,CS} use the semi-stability of the extremal
solution $u^*$. In fact, one first proves estimates for any regular semi-stable solution $u$ of
\begin{equation}\label{pbl}\left\{ \begin{array}{rcll} -\Delta u &=&f(u)&\textrm{in }\Omega \\
u&=&0&\textrm{on }\partial\Omega,\end{array}\right.\end{equation}
then one applies these estimates to the minimal solutions $u_\lambda$ (which are semi-stable), and finally by
monotone convergence the estimates also hold for the extremal solution $u^*$.

Recall that a classical solution $u$ of \eqref{pbl} is said to be \emph{semi-stable} if
the second variation of energy at $u$ is nonnegative, i.e., if
 \begin{equation}\label{stcond} Q_{u}(\xi)=\int_\Omega\left\{|\nabla\xi|^2-f'(u)\xi^2\right\}dx \geq0
\end{equation} for all $\xi\in C^1_0(\overline\Omega)$. For instance, every local minimizer of 
the energy is a semi-stable solution.

The proof of the estimates in \cite{CC,C4,CS} was inspired by the proof of Simons theorem on the
nonexistence of singular minimal cones in $\mathbb R^n$ for $n \leq 7$ (see \cite{CC2} for more details). 
The key idea is to take
$\xi=|\nabla u|\eta$ (or $\xi=u_r\eta$ in the radial case) and compute $Q_{u}(|\nabla u|\eta)$
in the semi-stability
property satisfied by $u$. In this way the expression of $Q_u$ in terms of $\eta$ turns out not to depend on
$f$ and, thanks to this, a clever choice of the test function $\eta$ leads to $L^p$ and $L^\infty$ bounds
depending on the dimension $n$ but valid for all nonlinearities $f$.

In this paper we will proceed in a similar way, proving first results for general positive semi-stable
solutions of \eqref{pbl} and then applying them to $u_\lambda$ to deduce estimates for $u^*$.
We will take $\xi=u_s\eta$ and $\xi=u_t\eta$ separately instead of $\xi=|\nabla u|\eta$, 
and this will lead to bounds for
\begin{equation}\label{estsemi}\int_\Omega u_s^2 s^{-2\alpha-2}dx\qquad\textrm{and}\qquad
\int_\Omega u_t^2 t^{-2\beta-2}dx\end{equation}
for any $\alpha<\sqrt{m-1}$ and $\beta<\sqrt{k-1}$.

When the domain $\Omega$ is convex,
we will have the additional information $\|u\|_{L^\infty}=u(0)$, $u_s\leq0$, and $u_t\leq0$,
which combined with \eqref{estsemi} will lead to $L^{\infty}$ and $L^p$ estimates for $u^*$.

Instead, when the domain $\Omega$ is not convex the maximum of $u$ may not be achieved at
the origin ---see Figure 1 for an example in which $u(0)$ will be much smaller than
$\|u\|_{L^\infty}$. Thus, in nonconvex domains we can not apply the same argument.
However, if the maximum is away from $\{s=0\}$ and $\{t=0\}$ (as in Figure 1) then
the problem is essentially two dimensional near the maximum, since $dx=c_{m,k}s^{m-1}t^{k-1}dsdt$
and both $s$ and $t$ will be positive and bounded below around the maximum. Thus, the two dimensional Sobolev inequality
will hold near the maximum. We will still have to prove some
boundary estimates, for instance estimates near the boundary points $P$ and $Q$ in Figure 1. But,
by the same reason as before, near $P$ the coordinate $s$ is positive and bonded below.
Thus, the problem near $P$ will be essentially $1+k$ dimensional, and
we assume $k=n-m\leq n-2$.
This will allow us, if $1+k\leq n-1$ are small enough, to use Nedev's \cite{N} $W^{2,p}$ estimates
to obtain boundary estimates.

\begin{figure}
\centering
\includegraphics{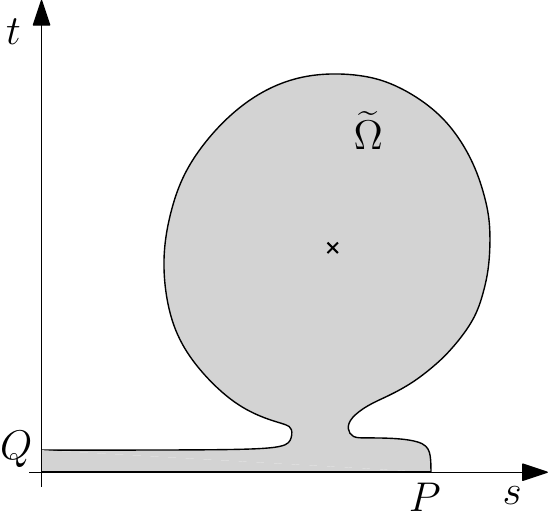}
\caption{A non-convex domain for which the maximum of $u^*$ will not be $u^*(0)$}
\end{figure}

Our result for general positive semi-stable solutions of \eqref{pbl} reads as follows. It states
global estimates controlled in terms of boundary estimates.

\begin{prop} \label{prop1} Assume \eqref{double}. Let $\Omega\subset \mathbb{R}^n$ be a
smooth and bounded domain of double revolution, $f$ be any $C^{1}$ function, and
$u$ be a positive bounded semi-stable solution of \eqref{pbl}.

Let $\delta$ be any positive real number, and define \[\Omega_{\delta}=\{x\in\Omega:
{\rm dist}(x,\partial\Omega)<\delta\}.\]
Then, for some constant $C$ depending only on $\Omega$, $\delta$, $n$, and also $p$ in
part b) below, one has:
\begin{itemize}
\item[a)] If $n\leq7$ and $\Omega$ is convex, then $\|u\|_{L^{\infty}(\Omega)}\leq
C\left(\|u\|_{L^{\infty}(\Omega_{\delta})}+\|f(u)\|_{L^{\infty}(\Omega_{\delta})}\right)$.
\item[b)] If $n\geq8$ and $\Omega$ is convex, then $\|u\|_{L^p(\Omega)}\leq
C\left(\|u\|_{L^{\infty}(\Omega_{\delta})}+\|f(u)\|_{L^{\infty}(\Omega_{\delta})}\right)$
for each $p<p_{m,k}$, where $p_{m,k}$ is given by \eqref{pmk}.
\item[c)] For all $n\geq4$, $\|u\|_{H^1_0(\Omega)}\leq C\|u\|_{H^1(\Omega_{\delta})}$.
\end{itemize}
\end{prop}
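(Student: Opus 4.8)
The plan is to exploit the semi-stability of $u$ through the substitutions $\xi=u_s\eta$ and $\xi=u_t\eta$ announced in the introduction, so as to produce weighted $L^2$ bounds for $u_s$ and $u_t$ controlled by boundary data, and then---for parts a) and b)---to integrate these against the monotonicity $u_s\le0$, $u_t\le0$ by means of the weighted Sobolev inequality of Proposition~\ref{sobx^A}. First I would record the preliminaries. Since $u$ is semi-stable it depends only on $(s,t)$ and solves \eqref{eqst}; differentiating \eqref{eqst} in $s$ and rearranging, $v:=u_s$ satisfies
\[-\Delta v = f'(u)\,v-\frac{m-1}{s^2}\,v\qquad\text{in }\widetilde\Omega\cap\{s>0\},\]
where $\Delta$ is the Laplacian of $\mathbb R^n$ acting on functions of $(s,t)$, and the analogous identity holds for $u_t$ with $(m-1)/s^2$ replaced by $(k-1)/t^2$. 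Moreover, interior elliptic ($W^{2,p}\hookrightarrow C^{1,\gamma}$) estimates for \eqref{pbl}, whose constants do not depend on $f$, give $\|\nabla u\|_{C^{0,\gamma}(K)}\le C(\|u\|_{L^\infty(\Omega_\delta)}+\|f(u)\|_{L^\infty(\Omega_\delta)})$ for every $\gamma<1$ and every compact $K\Subset\Omega$ contained in $\Omega_\delta$; since $u_s$ vanishes on $\{s=0\}$, this yields $|u_s(s,t)|\le C(\|u\|_{L^\infty(\Omega_\delta)}+\|f(u)\|_{L^\infty(\Omega_\delta)})\,s^{\gamma}$ near $\{s=0\}$ on such a $K$, and symmetrically for $u_t$.

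The heart of the argument is as follows. Fix a cutoff $\psi$ with $\psi\equiv1$ outside $\Omega_{\delta/2}$ and $\psi\equiv0$ on $\Omega_{\delta/4}$, so $|\nabla\psi|\le C/\delta$ and $\operatorname{supp}\nabla\psi$ is a compact subset of $\Omega_\delta$ lying at positive distance from $\partial\Omega$; for $0\le\alpha<\sqrt{m-1}$ set $\eta=s^{-\alpha}\psi$, which is Lipschitz with compact support in $\Omega$, so $\xi=u_s\eta\in H^1_0(\Omega)$ is admissible in \eqref{stcond} (which extends from $C^1_0$ to $H^1_0$ by density because $f'(u)\in L^\infty$). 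Testing the weak form of the equation for $u_s$ against $u_s\eta^2$ and inserting into \eqref{stcond}---after a routine cutoff near $\{s=0\}\cup\{t=0\}$---the terms containing $f$ and the cross terms cancel, leaving
\[(m-1)\int_\Omega\frac{u_s^2\,\eta^2}{s^2}\,dx\ \le\ \int_\Omega u_s^2\,|\nabla\eta|^2\,dx ,\]
where $m\ge2$ is used. Since $|\nabla s|=1$, expanding $|\nabla\eta|^2$ and absorbing the term $\alpha^2\int u_s^2 s^{-2\alpha-2}\psi^2$ (legitimate because $\alpha^2<m-1$) gives
\[(m-1-\alpha^2)\int_\Omega u_s^2 s^{-2\alpha-2}\psi^2\,dx\ \le\ C\!\!\int_{\operatorname{supp}\nabla\psi}\!\!u_s^2\big(s^{-2\alpha-1}+s^{-2\alpha}\big)\,dx ;\]
the right-hand side lives on a compact subset of $\Omega_\delta$, where by the preliminaries and the decay $|u_s|\lesssim s^\gamma$ (which makes $\int s^{-2\alpha-1+2\gamma}s^{m-1}\,ds<\infty$ for $\gamma$ near $1$, as $\sqrt{m-1}<\tfrac{m+1}{2}$) it is $\le C(\|u\|_{L^\infty(\Omega_\delta)}+\|f(u)\|_{L^\infty(\Omega_\delta)})^2$. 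Hence $\int_\Omega u_s^2 s^{-2\alpha-2}\,dx\le C(\|u\|_{L^\infty(\Omega_\delta)}+\|f(u)\|_{L^\infty(\Omega_\delta)})^2$ for $\alpha<\sqrt{m-1}$, and symmetrically for $u_t$ with $\beta<\sqrt{k-1}$. Part c) is the case $\alpha=\beta=0$: with $\psi\equiv1$ outside $\Omega_\delta$ (and $\operatorname{supp}\nabla\psi\subset\Omega_\delta$) the displayed inequality gives $\int_{\Omega\setminus\Omega_\delta}u_s^2 s^{-2}dx\le\tfrac{C}{\delta^2}\int_{\Omega_\delta}u_s^2dx$, whence $\int_\Omega u_s^2dx\le C\int_{\Omega_\delta}u_s^2dx$ (as $s\le\operatorname{diam}\Omega$), and likewise for $u_t$; since $u$ depends only on $(s,t)$, $\|u\|_{H^1_0(\Omega)}^2=\int_\Omega(u_s^2+u_t^2)dx\le C\|u\|_{H^1(\Omega_\delta)}^2$.

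For parts a) and b) I would use that, $\Omega$ being convex, $\widetilde\Omega$ is a convex subset of the first quadrant with vertex at the origin, $u_s\le0$, $u_t\le0$ in $\widetilde\Omega$, and $u=0$ on $\partial\Omega$; thus, writing $(S(t),t)\in\partial\Omega$ for the right endpoint of the slice at height $t$,
\[0<u(s,t)=\int_s^{S(t)}|u_\sigma(\sigma,t)|\,d\sigma\ \le\ \int_0^{S(t)}|u_\sigma(\sigma,t)|\,d\sigma,\]
and symmetrically $u(s,t)\le\int_0^{T(s)}|u_\tau(s,\tau)|\,d\tau$. Feeding these two pointwise controls, together with the weighted bounds above (for $\alpha$ close to $\sqrt{m-1}$ and $\beta$ close to $\sqrt{k-1}$), into the weighted Sobolev inequality of Proposition~\ref{sobx^A} for the density $s^{m-1}t^{k-1}$ on $\widetilde\Omega$, and carrying out the ensuing Hölder estimates variable by variable, one gets $\|u\|_{L^p(\Omega)}\le C(\|u\|_{L^\infty(\Omega_\delta)}+\|f(u)\|_{L^\infty(\Omega_\delta)})$ for every $p<p_{m,k}$, which is b); and when $q_{m,k}\le2$, i.e.\ $p_{m,k}=\infty$, which by Remark~\ref{qmkconvex} is exactly the range $n\le7$, the same computation gives the $L^\infty$ bound of a).

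The main obstacle is precisely this last passage from weighted $L^2$ gradient control to pointwise/$L^p$ control. In the radial setting of \cite{CC} the analogous step is one-dimensional and reduces to a single Cauchy--Schwarz inequality in $r$; here $u$ genuinely depends on the two variables $(s,t)$ and the two bounds above are \emph{anisotropic}---one carries only a negative power of $s$, the other only a negative power of $t$. Making them interact so as to reach the sharp exponent $p_{m,k}$ is exactly what forces the new weighted Sobolev and isoperimetric inequality with the monomial density $s^{m-1}t^{k-1}$, whose effective dimension decouples as $q(m)+q(k)$ with $q(x)=x/(2+\sqrt{x-1})$, and it requires keeping careful track of this density together with the Hardy weights $s^{-2\alpha}$ and $t^{-2\beta}$; the boundedness threshold $n\le7$ is the regime in which $q(m)+q(k)\le2$. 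A secondary technical point, flagged above, is the behaviour of all weighted integrals along $\{s=0\}$ and $\{t=0\}$, handled via the $C^{1,\gamma}$ decay $|u_s|\lesssim s^{\gamma}$, $|u_t|\lesssim t^{\gamma}$.
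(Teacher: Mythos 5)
Your derivation of the key weighted stability estimate (the paper's Lemma~\ref{lema}) is essentially the paper's argument: test with $\xi=u_s\eta$ and $\xi=u_t\eta$, cut off near $\partial\Omega$ and near $\{s=0\}\cup\{t=0\}$, absorb $\alpha^2\int u_s^2 s^{-2\alpha-2}\psi^2$ using $\alpha^2<m-1$, and control the boundary-layer remainder via the $C^{1,\gamma}$ decay $|u_s|\lesssim s^\gamma$, $|u_t|\lesssim t^\gamma$. Your part~c) (take $\alpha=\beta=0$, use $s\le\operatorname{diam}\Omega$) also matches the paper. So far this is correct and essentially the same route.

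The gap is in how you pass from the anisotropic bounds $\int u_s^2 s^{-2\alpha-2}\,dx\le C$ and $\int u_t^2 t^{-2\beta-2}\,dx\le C$ to parts a) and b). You propose to ``feed the pointwise controls $u(s,t)\le\int_0^{S(t)}|u_\sigma|\,d\sigma$ into Proposition~\ref{sobx^A} for the density $s^{m-1}t^{k-1}$'' and ``carry out Hölder estimates variable by variable.'' As written this does not close: the stability estimate does not control $\int s^{m-1}t^{k-1}|\nabla u|^2\,ds\,dt$, but rather the same integral with the extra Hardy factors $s^{-2\alpha-2}$ and $t^{-2\beta-2}$, and these factors are precisely what encode the gain over the unweighted case. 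The device that reconciles the Hardy weights with a single monomial density is the change of variables $\sigma=s^{2+\alpha}$, $\tau=t^{2+\beta}$: under it, $u_s^2 s^{-2\alpha-2}\,s^{m-1}\,ds$ becomes $c_\alpha\, u_\sigma^2\,\sigma^{\frac{m}{2+\alpha}-1}\,d\sigma$ (and analogously in $t$), turning the two anisotropic Hardy-type bounds into a single weighted $H^1$ bound
\[
\int_{\widetilde U}\sigma^{\,a}\tau^{\,b}\,|\nabla_{(\sigma,\tau)}u|^2\,d\sigma\,d\tau\le C\bigl(\|u\|_{L^\infty(\Omega_\delta)}+\|f(u)\|_{L^\infty(\Omega_\delta)}\bigr)^2,\qquad a=\tfrac{m}{2+\alpha}-1,\ b=\tfrac{k}{2+\beta}-1,
\]
with effective dimension $D=a+b+2=\tfrac{m}{2+\alpha}+\tfrac{k}{2+\beta}$. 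Only \emph{this} integral fits the hypotheses of Proposition~\ref{sobx^A}, which gives b) directly with $p=2^*=\tfrac{2D}{D-2}\to p_{m,k}$ as $\alpha\to\sqrt{m-1}$, $\beta\to\sqrt{k-1}$. Without the change of variables, your ``density $s^{m-1}t^{k-1}$'' leaves the Hardy weights unused, and no Hölder juggling in the original $(s,t)$ variables gets you to $p_{m,k}$ --- you would only recover the unweighted exponent $\frac{2n}{n-4}$ from Theorem~\ref{thcabre}.

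Relatedly, part a) is not just ``the $q_{m,k}\le2$ limit of b).'' The paper proves a) by a separate, more elementary argument: after the same change of variables, one bounds $u(0)=\|u\|_{L^\infty}$ by integrating $|\nabla_{(\sigma,\tau)}u|$ along segments from the origin, averages over a sector of angles, and applies Cauchy--Schwarz against the weighted gradient bound above; the resulting weight integral $\int_{\{\tau<\sigma<2\tau\}}\rho^{-D}\,d\sigma\,d\tau$ is finite precisely when $D<2$, which is the range $n\le7$. Your pointwise inequality $u(s,t)\le\int_0^{S(t)}|u_\sigma|\,d\sigma$ integrates along a \emph{coordinate} line in the original variables and only engages one of the two weighted bounds at a time; it does not capture the two-dimensional interaction through $\rho=\sqrt{\sigma^2+\tau^2}$ that makes the threshold $D<2$ (equivalently $n\le7$) appear. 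So the structure is right but the two specific mechanisms --- the $\sigma=s^{2+\alpha}$, $\tau=t^{2+\beta}$ substitution, and the averaged-ray/Cauchy--Schwarz argument for a) --- are missing from your sketch and are exactly where the proposal would fail if pushed through.
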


To prove part b) of Proposition \ref{prop1} we will need a new weighted Sobolev
inequality in $(\mathbb R_+)^2=\{(\sigma,\tau)\in\mathbb R^2:\sigma>0, \tau>0\}$. 
We will use this inequality in the $(\sigma,\tau)$-plane defined after the change of variables
\[\sigma=s^{2+\alpha},\ \ \tau=t^{2+\beta},\] where $\alpha$ and $\beta$ are the 
exponents in \eqref{estsemi}.
It states the following.

\begin{prop} \label{sobx^A} Let $a>-1$ and $b>-1$ be real numbers, being positive at least
one of them, and let \[D=2+a+b.\]
Let $u$ be a nonnegative Lipschitz function with compact support in $\mathbb R^2$ such that $u\in C^1(\{u>0\})$,
\[u_\sigma\leq0\ \textrm{ and }\ u_\tau\leq0\ \textrm{ in }\ (\mathbb R_+)^2,\] with strict
inequalities whenever $u>0$.
Then, for each $1\leq q<D$ there exists a constant $C$, depending only on $a,\ b$, and $q$,
such that \begin{equation}\label{ineqSobol}\left(\int_{(\mathbb R_+)^2}\sigma^a\tau^b
|u|^{q^*}d\sigma d\tau\right)^{1/q^*}\leq C\left(\int_{(\mathbb R_+)^2} \sigma^a\tau^b
|\nabla u|^qd\sigma d\tau\right)^{1/q},
\end{equation} where $q^*=\frac{Dq}{D-q}$.
\end{prop}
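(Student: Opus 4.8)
The plan is to reduce the general exponent $1\le q<D$ to the endpoint $q=1$, and to prove the case $q=1$ via the coarea formula together with a weighted isoperimetric inequality for the super-level sets of $u$; throughout, $\|\cdot\|_{L^r_w}$ denotes the $L^r$ norm on $(\mathbb R_+)^2$ with respect to the weight $\sigma^a\tau^b\,d\sigma\,d\tau$. \emph{Reduction to $q=1$.} Assume \eqref{ineqSobol} holds when $q=1$, i.e. $\|u\|_{L^{D/(D-1)}_w}\le C\|\nabla u\|_{L^1_w}$ for every function satisfying the hypotheses. Given $1<q<D$, set $\gamma=\frac{q(D-1)}{D-q}\ge 1$ and apply the case $q=1$ to $v=u^\gamma$; since $\gamma\ge1$, $v$ is again nonnegative, Lipschitz, compactly supported, $C^1$ on $\{v>0\}=\{u>0\}$, and strictly decreasing in each variable where positive, so it is admissible. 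Using $|\nabla v|=\gamma u^{\gamma-1}|\nabla u|$ and H\"older's inequality with exponents $q$ and $q'$,
\[
\|u\|_{L^{q^*}_w}^{\gamma}=\|u^\gamma\|_{L^{D/(D-1)}_w}\le C\gamma\,\|u^{\gamma-1}|\nabla u|\|_{L^1_w}\le C\gamma\,\|u\|_{L^{(\gamma-1)q'}_w}^{\gamma-1}\,\|\nabla u\|_{L^q_w},
\]
and a direct computation gives $\gamma\frac{D}{D-1}=(\gamma-1)q'=q^*=\frac{Dq}{D-q}$. Since $\|u\|_{L^{q^*}_w}<\infty$ ($u$ being bounded with compact support), dividing yields \eqref{ineqSobol} for this $q$.

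\emph{The endpoint $q=1$.} Write $E_\lambda=\{u>\lambda\}$, $\mu(E)=\int_E\sigma^a\tau^b$, and let $P_w(E)=\int_{\partial E\cap(\mathbb R_+)^2}\sigma^a\tau^b\,d\mathcal H^1$ be the weighted (relative) perimeter. The coarea formula gives $\int_{(\mathbb R_+)^2}\sigma^a\tau^b|\nabla u|=\int_0^\infty P_w(E_\lambda)\,d\lambda$, and the layer-cake identity $u=\int_0^\infty\mathbf 1_{E_\lambda}\,d\lambda$ together with Minkowski's integral inequality (using $D/(D-1)\ge1$) gives $\|u\|_{L^{D/(D-1)}_w}\le\int_0^\infty\mu(E_\lambda)^{(D-1)/D}\,d\lambda$. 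Hence it suffices to prove the \emph{weighted isoperimetric inequality} $\mu(E)^{(D-1)/D}\le C\,P_w(E)$ for the sets $E=E_\lambda$. This is where the monotonicity of $u$ enters: each $E_\lambda$ is a \emph{lower set} --- if $x\in E_\lambda$ and $0<x'\le x$ coordinatewise then $x'\in E_\lambda$ --- so it is star-shaped about the origin and, after smoothing, is bounded in the open quadrant by a decreasing curve; in particular all of its weighted perimeter lies in $(\mathbb R_+)^2$, with no contribution from the axes $\{\sigma=0\}\cup\{\tau=0\}$ on which $\sigma^a\tau^b$ vanishes or blows up.

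\emph{The weighted isoperimetric inequality, and the main obstacle.} I would prove it by the Alexandrov--Bakelman--Pucci method used by Cabr\'e for the classical isoperimetric inequality, adapted to the degenerate operator $v\mapsto\sigma^{-a}\tau^{-b}\mathrm{div}(\sigma^a\tau^b\nabla v)$ --- precisely the scheme carried out, with sharp constants and extremal sets, in the companion paper \cite{CR}, so that here a nonsharp constant is enough. For a smooth bounded lower set $E$ one solves the Neumann problem
\[
\mathrm{div}\big(\sigma^a\tau^b\nabla v\big)=c\,\sigma^a\tau^b\ \text{ in }E,\qquad \partial_\nu v=1\ \text{ on the curved part of }\partial E,
\]
with $c=P_w(E)/\mu(E)$ forced by the compatibility condition; one then studies the lower contact set $\Gamma=\{x\in\overline E:\ v(y)\ge v(x)+\nabla v(x)\cdot(y-x)\ \text{for all }y\in\overline E\}$, where $D^2v\ge0$ and, by the boundary condition, $\nabla v$ covers $\overline{B_1}$, and applies a \emph{weighted} arithmetic--geometric / Monge--Amp\`ere inequality, valid on $\Gamma$ precisely because $E$ sits in the orthant (so that at contact points the first-order terms $v_\sigma/\sigma$ and $v_\tau/\tau$ carry the right sign), to turn $\mathrm{div}(\sigma^a\tau^b\nabla v)=c\,\sigma^a\tau^b$ into the bound $\mu(E)^{D-1}\le C\,P_w(E)^D$. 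One then removes the smoothness of $E$ by approximating with smooth lower sets, and combines the three steps. The whole difficulty is concentrated in this last step: setting up the degenerate Neumann problem, and above all proving the weighted Monge--Amp\`ere inequality on the contact set while correctly handling the weight near the axes, where it is singular (if $-1<a<0$ or $-1<b<0$) or degenerate (if $a>0$ or $b>0$). Everything else --- the power substitution with H\"older, the coarea and layer-cake decompositions, and the approximation of the $E_\lambda$ --- is routine.
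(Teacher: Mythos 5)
Your reduction from $1<q<D$ to $q=1$ via $v=u^\gamma$ with $\gamma=\frac{q(D-1)}{D-q}$, and your layer-cake/Minkowski/coarea treatment of the endpoint, match the paper exactly, and the algebra ($\gamma\frac{D}{D-1}=(\gamma-1)q'=q^*$) is correct. The problem is the remaining ingredient, the weighted isoperimetric inequality $\mu(E)^{(D-1)/D}\le C\,P_w(E)$ for the super-level sets: you identify it as the crux and then do not prove it, but instead sketch an ABP/Monge--Amp\`ere strategy (solve a degenerate Neumann problem, study the contact set, invoke a weighted arithmetic--geometric inequality there, handle the singular weight near the axes) while explicitly conceding that ``the whole difficulty is concentrated in this last step.'' That is not a proof; it is a research program, and in fact it is the program the authors defer to the companion paper \cite{CR} precisely because it is considerably more involved (they pursue it there in order to get the sharp constant). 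A blind reader of your write-up would be left with the hardest step open.

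What the paper actually does for the isoperimetric inequality is far more elementary and uses the lower-set structure directly, with no PDE at all. By the hypothesis $u_\sigma<0$, $u_\tau<0$ on $\{u>0\}$, each $E_\lambda$ is a bounded subgraph $\widetilde U=\{(\sigma,\tau):\ 0<\sigma,\ 0<\tau<\psi(\sigma)\}$ for a nonincreasing $\psi\in W^{1,1}$. After reducing by symmetry to $a>0$, one computes $m(\widetilde U)=\frac{1}{b+1}\int_0^\infty\sigma^a\psi^{b+1}\,d\sigma$ and $m(\partial\widetilde U\cap(\mathbb R_+)^2)=\int_0^\infty\sigma^a\psi^b\sqrt{1+\psi'^2}\,d\sigma$. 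Choosing $\mu>0$ with $m(\widetilde U)=\mu^D/((a+1)(b+1))$, a one-line contradiction argument shows $\psi(\sigma)<\mu$ for $\sigma>\mu$. Then the elementary bound $\sqrt{1+\psi'^2}\ge c\bigl(1-\frac{b+1}{a}\psi'\bigr)$ (using $\psi'\le0$ and $a>0$) plus integration by parts turn the perimeter into $c\int_0^\infty\sigma^a\psi^{b+1}\bigl(\frac{1}{\psi}+\frac{1}{\sigma}\bigr)d\sigma\ge\frac{c}{\mu}m(\widetilde U)=c\,m(\widetilde U)^{(D-1)/D}$. No Neumann problem, no contact set, no Monge--Amp\`ere inequality: the monotonicity of the level sets is exploited through the graph parametrization and a single integration by parts. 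You should replace your ABP sketch by an argument of this kind; as it stands your proposal reproduces the easy two-thirds of the proof and leaves the substantive third unproved.
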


\begin{rem}\label{remsob} When $a$ and $b$ are nonnegative integers, inequality \eqref{ineqSobol}
is a direct consequence of the classical Sobolev inequality in $\mathbb R^D$.
Namely, define in $\mathbb R^D=\mathbb R^{a+1}\times \mathbb R^{b+1}$ the radial variables 
$\sigma=|(x_1,\ldots,x_{a+1})|$ and $\tau=|(x_{a+2},\ldots,x_D)|$. 
Then, for functions $u$ defined in $\mathbb R^D$ depending
only on the variables $\sigma$ and~$\tau$, write the integrals appearing in the classical Sobolev
inequality in $\mathbb R^D$ in terms of $\sigma$ and~$\tau$. 
Since $dx=c_{a,b}\sigma^a\tau^bd\sigma d\tau$, the obtained
inequality is precisely the one given in Proposition~\ref{sobx^A}.

Thus, the previous proposition extends the classical Sobolev inequality to the case of
non-integer exponents $a$ and $b$. In another article, \cite{CR}, we prove inequality
\eqref{ineqSobol} with $(\mathbb R_+)^2$ replaced by $(\mathbb R_+)^d$ and with $\sigma^a\tau^b$
replaced by the monomial weight 
\[x^A:=x_1^{A_1}\cdots x_d^{A_d},\]
where $A_1,...,A_d$ are nonnegative real numbers.
We also prove a related
isoperimetric inequality with best constant, a weighted Morrey's inequality, and we determine
extremal sets and functions for some of these inequalities.
\end{rem}

In section 4 we establish the weighted Sobolev inequality of Proposition~\ref{sobx^A} 
as a consequence of a
new weighted isoperimetric inequality. Our proof is simple but does not give the best 
constant (in contrast with the more involved 
proof that we will give in \cite{CR} giving the best constant).
When $a$ and $b$ belong to $(0,q-1)$ ---i.e., $(0,1)$ when $q=2$, as in our application) inequality 
\eqref{ineqSobol} also follows from a result of P. Hajlasz \cite{H} 
in a very general framework of weights or measures. His result does not give the 
best constant and, besides, its constant depends on the support of the function. 

We will need to use the proposition for some exponents $a$ and $b$ in $(-1,0)$ 
---this happens for instance when $m=2$ or $m=3$. In this case the assumption $u_\sigma\leq0$, 
$u_\tau\leq0$ is crucial
for the inequality to hold with the optimal exponent $q^*$. Without this assumption, 
a Sobolev inequality is still true but with a smaller exponent than $q^*$ (this also follows 
from the results in \cite{H}). For $a>q-1$ the weight is no longer in the Muckenhoupt 
class $A_q$ and the results in \cite{H} do not apply.

The paper is organized as follows. In section 2 we prove the estimates of Proposition \ref{prop1}.
Section 3 deals with the regularity of the extremal solution of \eqref{pb}. Finally, in section 4
we prove the weighted Sobolev inequality of Proposition \ref{sobx^A}.

\section{Proof of Proposition \ref{prop1}} \label{est}

We start with a remark on the symmetry and monotonicity properties of solutions to \eqref{pbl},
as well as on the regularity of the functions $u_s$ and $u_t$.

\begin{rem}\label{symm} Note that when the domain is of double revolution, any bounded
semi-stable solution $u$ of \eqref{pbl} will depend only on the variables $s$ and $t$.
To prove this, define $v=x_iu_{x_j}-x_ju_{x_i}$, with $i\neq j$. Note that $u$ will will
depend only on $s$ and $t$ if and only if $v\equiv0$ for each $i,j\in\{1,...,m\}$ and for
each $i,j\in\{m+1,...,n\}$.

We first see that, for such indexes $i$ and $j$, $v$ is a solution of the linearized equation of \eqref{pbl}:
\begin{eqnarray*}\Delta v&=& \Delta(x_iu_{x_j}-x_ju_{x_i})\\
&=&x_i\Delta u_{x_j}+2\nabla x_i\cdot \nabla u_{x_j}-x_j\Delta u_{x_i}-2\nabla x_j\cdot \nabla u_{x_i}\\
&=&x_i(\Delta u)_{x_j}-x_j(\Delta u)_{x_i}\\
&=&-f'(u)\{x_i u_{x_j}-x_ju_{x_i}\}\\
&=&-f'(u)v.
\end{eqnarray*}
Note that $v$ is a tangential derivative of $u$ along $\partial\Omega$ since $\Omega$ is a domain of double revolution.
Therefore, since $u=0$ on $\partial \Omega$ then $v=0$ on $\partial \Omega$.
Thus, multiplying the equation by $v$ and integrating by parts, we obtain
\[\int_{\Omega} \{|\nabla v|^2-f'(u)v^2\}dx=0.\]
But since $u$ is semi-stable, the first Dirichlet eigenvalue $\lambda_1(\Delta+f'(u);\Omega)\geq0$.

If $\lambda_1(\Delta+f'(u);\Omega)>0$, the previous inequality leads to $v\equiv0$.

If $\lambda_1(\Delta+f'(u);\Omega)=0$, then we must have $v=K\phi_1$, where $K$ is a
constant and $\phi_1$ is the first Dirichlet eigenfunction of $\Delta+f'(u)$,
which we may take to be positive in $\Omega$.
But since $v$ is the derivative of $u$ along the vector field $\partial_t=x_i\partial_{x_j}-
x_j\partial_{x_i}$, and its integral curves are closed, $v$ can not have constant sign. Thus,
$K=0$, that is, $v\equiv0$.

Hence, we have seen that any classical semi-stable solution $u$ of \eqref{pbl} depends only on
the variables $s$ and $t$. Moreover, by the classical result of Gidas-Ni-Nirenberg \cite{GNN},
when $\Omega$ is even and convex with respect each coordinate and $u$ is a positive solution, 
we have $u_{x_i}\leq 0$ when $x_i>0$,
for $i=1,...,n$. In particular, when $\Omega$ is a convex domain of double revolution,
we have that $u_s<0$ and $u_t<0$ for $s>0$, $t>0$, $(s,t)\in\tilde\Omega$.
In particular, \[\|u\|_{L^{\infty}(\Omega)}=u(0).\]

On the other hand, by standard elliptic regularity for \eqref{pbl} and its linearization, 
every bounded solution $u$ of \eqref{pbl}
satisfies $u\in W^{3,p}(\Omega)\cap C^{2,\nu}(\overline\Omega)$ for all $p<\infty$ 
and $0<\nu<1$. In particular,
\[u_s\in H^2_{\rm loc}(\Omega\backslash\{s=0\})\qquad \textrm{and}\qquad 
u_t\in H^2_{\rm loc}(\Omega\backslash\{t=0\}),\]
since $u_s=u_{x_1}\frac{x_1}{s}+\cdots+u_{x_m}\frac{x_m}{s}$ and $u_t=u_{x_{m+1}}\frac{x_{m+1}}{t}
+\cdots+u_{x_n}\frac{x_n}{t}$.
In addition, since $u=u(s,t)$ is the restriction to the first quadrant of the $(x_1,x_{m+1})$-plane 
of an even $C^{2,\nu}$ function of $x_1$ and $x_{m+1}$, we deduce that
\begin{equation}\label{ustlip} u_s\in {\rm Lip}(\overline\Omega),\ u_t\in {\rm Lip}(\overline\Omega),
\ u_s=0\textrm{ when }s=0,\textrm{ and } u_t=0\textrm{ when }t=0.\end{equation}
We note that $u_s$ and $u_t$ do not belong to $C^1(\overline\Omega)$, neither to $H^2(\Omega)$. 
For instance, the solution of $-\Delta u=1$ in $B_1\subset \mathbb R^n$ is given by
$u=\frac{1}{2n}(1-s^2-t^2)$ and, thus, $u_s=-\frac 1n s$ is only Lipschitz in $\Omega$.
\end{rem}

Before proving Proposition \ref{prop1}, we will need two
preliminary results. The first one, Lemma \ref{lem1}, was already used in \cite{CC,C4}. 
In this paper we use it taking the function $c$ on its statement to be $u_s$ and $u_t$. 
Note that $c=u_s\in H^2_{\rm loc}(\Omega\backslash \{s=0\})$ but $u_s$ is not $H^2$ in a 
neighborhood in $\Omega$ of $\{s=0\}$.

\begin{lem}\label{lem1} Let $u$ be a bounded semi-stable solution of \eqref{pbl}, 
$V$ be an open set with $V\subset \Omega$,
and $c$ be a $H^2_{\rm loc}(V)$ function. Then,
\[ \int_{\Omega}c\{\Delta c+f'(u)c\}\eta^2dx\leq \int_{\Omega}c^2|\nabla \eta|^2dx\] for all
$\eta\in C^1(V)$ with compact support in $V$.
\end{lem}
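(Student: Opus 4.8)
The plan is to insert $\xi = c\eta$ into the semi-stability inequality \eqref{stcond} and integrate by parts to recover the operator $\Delta c + f'(u)c$ acting on $c$. There is a regularity subtlety: $\xi = c\eta$ need not be $C^1_0(\overline\Omega)$ because $c = u_s$ (or $u_t$) is only Lipschitz near $\{s=0\}$. However, since $\eta$ has compact support in $V$ and $V\subset\subset\Omega$, the function $c\eta$ is at least Lipschitz with compact support in $\Omega$, hence in $H^1_0(\Omega)$, and by density the stability inequality $Q_u(\xi)\ge 0$ extends from $C^1_0(\overline\Omega)$ to all $\xi\in H^1_0(\Omega)$. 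So the first step is to record that \eqref{stcond} holds for $\xi\in H^1_0(\Omega)$, and in particular for $\xi = c\eta$.

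Next I expand $Q_u(c\eta)$. We have $\nabla(c\eta) = \eta\nabla c + c\nabla\eta$, so
\[
|\nabla(c\eta)|^2 = \eta^2|\nabla c|^2 + 2c\eta\,\nabla c\cdot\nabla\eta + c^2|\nabla\eta|^2 .
\]
The cross term can be rewritten using $2c\eta\,\nabla c\cdot\nabla\eta = \nabla(c^2)\cdot(\eta\nabla\eta) = \tfrac12\nabla(c^2)\cdot\nabla(\eta^2)$. Thus
\[
0\le Q_u(c\eta) = \int_\Omega \eta^2\left(|\nabla c|^2 - f'(u)c^2\right)dx + \int_\Omega \tfrac12\nabla(c^2)\cdot\nabla(\eta^2)\,dx + \int_\Omega c^2|\nabla\eta|^2\,dx .
\]
Rearranging, the asserted inequality is equivalent to
\[
\int_\Omega c\{\Delta c + f'(u)c\}\eta^2\,dx \le -\int_\Omega \eta^2\left(|\nabla c|^2 - f'(u)c^2\right)dx - \int_\Omega \tfrac12\nabla(c^2)\cdot\nabla(\eta^2)\,dx ,
\]
and in fact I expect equality is not needed — what we want is precisely that the left-hand side equals $-\int_\Omega\eta^2(|\nabla c|^2 - f'(u)c^2)\,dx - \int_\Omega\tfrac12\nabla(c^2)\cdot\nabla(\eta^2)\,dx$ after an integration by parts, and then \eqref{stcond} gives the bound by $\int_\Omega c^2|\nabla\eta|^2$.

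To justify that integration by parts, note $c\in H^2_{\mathrm{loc}}(V)$ and $\eta^2$ is Lipschitz with compact support in $V$, so on $V$ we may compute $\int_\Omega (\Delta c)\, c\,\eta^2\,dx = -\int_\Omega \nabla c\cdot\nabla(c\eta^2)\,dx = -\int_\Omega \eta^2|\nabla c|^2\,dx - \int_\Omega c\,\nabla c\cdot\nabla(\eta^2)\,dx$, and $c\,\nabla c = \tfrac12\nabla(c^2)$ gives the last term as $-\tfrac12\int_\Omega\nabla(c^2)\cdot\nabla(\eta^2)\,dx$. Hence
\[
\int_\Omega c\{\Delta c + f'(u)c\}\eta^2\,dx = -\int_\Omega\eta^2\left(|\nabla c|^2 - f'(u)c^2\right)dx - \tfrac12\int_\Omega\nabla(c^2)\cdot\nabla(\eta^2)\,dx ,
\]
which is exactly $-\big(Q_u(c\eta) - \int_\Omega c^2|\nabla\eta|^2\,dx\big)$; since $Q_u(c\eta)\ge 0$, the claim follows. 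The main obstacle to watch is the regularity bookkeeping: ensuring $c\eta\in H^1_0(\Omega)$ so that the (density-extended) stability inequality applies, and ensuring $c$ is genuinely $H^2$ on a neighborhood of $\mathrm{supp}\,\eta$ so the integration by parts is legitimate — both are guaranteed by the hypotheses $\mathrm{supp}\,\eta\Subset V$ and $c\in H^2_{\mathrm{loc}}(V)$, and everything else is routine.
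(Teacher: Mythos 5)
Your proof is correct and takes exactly the same route as the paper: insert $\xi=c\eta$ into the semi-stability inequality \eqref{stcond} and integrate by parts over $V$, using $c\in H^2_{\rm loc}(V)$ and ${\rm supp}\,\eta\Subset V$ to justify both the membership $c\eta\in H^1_0(\Omega)$ (so the density-extended form of \eqref{stcond} applies) and the integration by parts. The paper compresses this to a single sentence; you have simply filled in the bookkeeping, and the computation checks out.
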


\begin{proof} It suffices to set $\xi=c\eta$ in the semi-stability condition \eqref{stcond}
and then integrate by parts in $V$. \end{proof}

We now apply Lemma \ref{lem1} separately with $c=u_s$ and with $c=u_t$, and then we choose
appropriately the test function $\eta$ to get the following result. This estimate is the key
ingredient in the proof of Proposition \ref{prop1}.

\begin{lem} \label{lema} Assume \eqref{double}. Let $\Omega\subset \mathbb{R}^n$ be a smooth
and bounded domain of double revolution, $f$ be any $C^{1}$ function, and $u$ be a
positive bounded semi-stable solution of \eqref{pbl}. Let $\alpha$ and $\beta$ be such that
\[0\leq\alpha<\sqrt{m-1}\ \textrm{ and }\ 0\leq\beta<\sqrt{k-1}.\]
Then, for each $\delta>0$ there exists a constant $C$,
which depends only on $\Omega$, $\delta$, $n$, $\alpha$, and $\beta$, such that
\begin{equation}\label{e}\left(\int_{\Omega}\left\{u_s^2 s^{-2\alpha-2}+
u_t^2 t^{-2\beta-2}\right\}dx\right)^{1/2}\leq C\left(\|u\|_{L^{\infty}(\Omega_{\delta})}+
\|f(u)\|_{L^{\infty}(\Omega_{\delta})}\right),\end{equation}
where \[\Omega_{\delta}=\{x\in\Omega: {\rm dist}(x,\partial\Omega)<\delta\}.\]
\end{lem}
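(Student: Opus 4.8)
The plan is to apply Lemma~\ref{lem1} twice, once with $c=u_s$ and once with $c=u_t$, on a suitable open set $V$ that stays away from the axes and from $\partial\Omega$, and then sum the two resulting inequalities. First I would compute $\Delta u_s + f'(u)u_s$. Differentiating the equation $-\Delta u = f(u)$ with respect to $x_i$ ($1\le i\le m$) gives $\Delta u_{x_i} + f'(u)u_{x_i}=0$; converting to the variable $s$ using $u_s = \sum_{i=1}^m u_{x_i}\frac{x_i}{s}$ (as in Remark~\ref{symm}), one finds the identity
\[
\Delta u_s + f'(u)u_s = \frac{m-1}{s^2}\,u_s
\]
on $\Omega\setminus\{s=0\}$; symmetrically $\Delta u_t + f'(u)u_t = \frac{k-1}{t^2}\,u_t$ on $\Omega\setminus\{t=0\}$. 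This is the step where the monomial weights enter: the "curvature" terms $\frac{m-1}{s^2}$ and $\frac{k-1}{t^2}$ are exactly what will be cancelled against the weights $s^{-2\alpha-2}$, $t^{-2\beta-2}$, and the Hardy-type constraints $\alpha<\sqrt{m-1}$, $\beta<\sqrt{k-1}$ come from requiring a strict sign after this cancellation.

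Plugging $c=u_s$ into Lemma~\ref{lem1} yields, for $\eta$ compactly supported in $V\subset\subset\Omega\setminus\{s=0\}$,
\[
(m-1)\int_\Omega \frac{u_s^2}{s^2}\,\eta^2\,dx \le \int_\Omega u_s^2|\nabla\eta|^2\,dx,
\]
and I would choose $\eta = s^{-\alpha}\zeta$ with $\zeta\in C^1_0(\Omega)$ a cutoff. Expanding $|\nabla(s^{-\alpha}\zeta)|^2 = \alpha^2 s^{-2\alpha-2}\zeta^2 - 2\alpha s^{-2\alpha-1}\zeta\,\zeta_s + s^{-2\alpha}|\nabla\zeta|^2$ (using $|\nabla s|=1$), the leading term on the right is $\alpha^2\int u_s^2 s^{-2\alpha-2}\zeta^2$, while the left side is $(m-1)\int u_s^2 s^{-2\alpha-2}\zeta^2$. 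Since $m-1>\alpha^2$, absorbing gives
\[
(m-1-\alpha^2)\int_\Omega u_s^2 s^{-2\alpha-2}\zeta^2\,dx \le \int_\Omega u_s^2\big(s^{-2\alpha}|\nabla\zeta|^2 - 2\alpha s^{-2\alpha-1}\zeta\zeta_s\big)\,dx,
\]
and the cross term is handled by Young's inequality (splitting off an $\varepsilon\int u_s^2 s^{-2\alpha-2}\zeta^2$ to reabsorb on the left, which is fine as long as $\alpha^2<m-1$ strictly). The right-hand side is then bounded by $C\int_{\text{supp}|\nabla\zeta|} u_s^2\,dx$ times a constant depending on $\delta,\alpha$, where $\zeta$ is chosen to equal $1$ on $\Omega\setminus\Omega_{\delta}$ and supported in $\Omega\setminus\Omega_{\delta/2}$, so that on $\text{supp}|\nabla\zeta|$ the quantity $s$ is bounded below by a positive constant depending on $\Omega,\delta$. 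Doing the same with $c=u_t$, $\eta=t^{-\beta}\zeta'$, and adding, one gets the left side of \eqref{e} (restricted to $\Omega\setminus\Omega_{\delta/2}$) controlled by $C\int_{\Omega_{\delta/2}\setminus\Omega_\delta}(u_s^2+u_t^2)\,dx \le C\|\nabla u\|_{L^\infty(\Omega_{\delta})}^2$. Finally $\|\nabla u\|_{L^\infty(\Omega_\delta)}$ is bounded by interior-type elliptic estimates in terms of $\|u\|_{L^\infty}+\|f(u)\|_{L^\infty}$ on a slightly larger neighborhood, hence by $\|u\|_{L^\infty(\Omega_{2\delta})}+\|f(u)\|_{L^\infty(\Omega_{2\delta})}$; after relabeling $\delta$ this is the right-hand side of \eqref{e}. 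The contribution from $\Omega_\delta$ itself to the left side of \eqref{e} is also bounded by the same quantity, since there $s,t$ need not be bounded below — here one uses instead that near the axes $u_s$ vanishes linearly in $s$ (by \eqref{ustlip}), so $u_s^2 s^{-2\alpha-2}\le C s^{-2\alpha}$ is integrable against $dx=c_{m,k}s^{m-1}t^{k-1}dsdt$ precisely because $m-1-2\alpha>-1$; this part is dominated by $\|\nabla u\|^2_{L^\infty(\Omega_\delta)}$ times a geometric constant.

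The main obstacle is the lack of smoothness of $u_s$ and $u_t$ across the axes: as noted in Remark~\ref{symm}, $u_s\notin H^2$ near $\{s=0\}$, so Lemma~\ref{lem1} cannot be applied with $V$ a full neighborhood. The fix is to take $V=\Omega\setminus\{s\le\varepsilon\}$, apply the estimate with a cutoff also vanishing near $\{s=\varepsilon\}$, and let $\varepsilon\to0$; the extra boundary terms produced near $\{s=\varepsilon\}$ involve $\int_{\{s\sim\varepsilon\}} u_s^2 s^{-2\alpha-1}$, which tends to $0$ as $\varepsilon\to0$ because $u_s=O(s)$ and the weight $s^{m-1}$ from $dx$ gives a factor $\varepsilon^{m-2\alpha}\to0$. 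One must carry this limiting argument carefully and simultaneously for the $s$- and $t$-directions (cutting off near both $\{s=\varepsilon\}$ and $\{t=\varepsilon\}$), but no new idea is needed beyond the Hardy-type inequalities $m-1>\alpha^2$ and $k-1>\beta^2$ already built into the hypotheses. The remaining bookkeeping — choice of $\zeta$, the Young-inequality constants, and the passage from $\|\nabla u\|_{L^\infty(\Omega_\delta)}$ to $\|u\|_{L^\infty}+\|f(u)\|_{L^\infty}$ via $W^{2,p}$ estimates and Sobolev embedding — is routine.
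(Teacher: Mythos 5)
Your overall approach matches the paper's: apply Lemma~\ref{lem1} separately with $c=u_s$ and $c=u_t$, take $\eta=s^{-\alpha}\zeta$ (resp.\ $t^{-\beta}\zeta$), cut off near $\{s=0\}$ and pass to the limit $\epsilon\to0$ using $m-2\alpha>0$, and absorb the Hardy term via $\alpha^2<m-1$. The limiting argument and the role of the Hardy constraint are correctly identified.

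There is, however, a genuine gap in your treatment of the boundary term. First, the claim that $s$ is bounded below on the support of $|\nabla\zeta|$ (a subset of $\Omega_\delta\setminus\Omega_{\delta/2}$) is simply false: already for $\Omega=B_1$ this boundary collar meets $\{s=0\}$. You partially correct this afterwards, but the correction introduces a second, more serious issue. You bound the contribution from $\Omega_\delta$ by invoking ``$u_s$ vanishes linearly in $s$'', i.e.\ $|u_s|\leq Cs$, and then claim the result is dominated by $\|\nabla u\|^2_{L^\infty(\Omega_\delta)}$. But the constant in $|u_s|\leq Cs$ is the Lipschitz seminorm of $u_s$, which involves $D^2u$ and is \emph{not} controlled by $\|\nabla u\|_{L^\infty}$ --- nor by $\|u\|_{L^\infty(\Omega_\delta)}+\|f(u)\|_{L^\infty(\Omega_\delta)}$, since $f$ is only $C^1$ and no Schauder-type control on $D^2u$ in terms of $\|f(u)\|_{L^\infty}$ alone is available. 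Conversely, the crude bound $|u_s|\leq\|\nabla u\|_{L^\infty}$ does not yield integrability of $s^{-2\alpha-2}$ against $s^{m-1}\,ds$ near $\{s=0\}$ when $\alpha$ is close to $\sqrt{m-1}$; for $m=2$ it already fails for every $\alpha\geq0$. So neither version of your bookkeeping produces a constant with the claimed dependence.

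The paper resolves this by working with a H\"older exponent $\nu<1$ rather than $\nu=1$: since $u_s\in{\rm Lip}(\overline\Omega)$ and $u_s=0$ on $\{s=0\}$ by \eqref{ustlip}, one has $|u_s(s,t)|\leq C s^{\nu}\,[u_s]_{C^{0,\nu}(\overline{\Omega_{\delta/2}})}$ with $\nu$ chosen close to $1$ so that $m-2\alpha-2+2\nu>0$ (possible precisely because $2\alpha<m$). The seminorm $[u_s]_{C^{0,\nu}}\leq\|u\|_{C^{1,\nu}(\overline{\Omega_{\delta/2}})}$ is then controlled via local boundary $W^{2,p}$ estimates and Sobolev embedding by $\|u\|_{L^\infty(\Omega_\delta)}+\|f(u)\|_{L^\infty(\Omega_\delta)}$, with a constant depending only on $\Omega$, $\delta$, $n$. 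That is the missing step: one must go through $C^{1,\nu}$, $\nu<1$ (available from $\|u\|_{L^\infty}+\|f(u)\|_{L^\infty}$), rather than through $C^2$ or the Lipschitz seminorm of $u_s$ (which are not).
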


\begin{proof} We will prove only the estimate for $u_s^2 s^{-2\alpha-2}$; the other term
can be estimated similarly.

Differentiating \eqref{eqst} with respect to $s$, we obtain
\[\Delta u_s-(m-1)\frac{u_s}{s^2}+f'(u)u_s=0\qquad \textrm{in }\ \Omega\backslash\{s=0\}.\]
Hence, setting
$c=u_s$ in Lemma \ref{lem1} (recall that $c=u_s\in H^2_{\rm loc}(\Omega\backslash \{s=0\})$ 
by Remark \ref{symm}), we have that
\begin{equation}\label{ineqeta} (m-1)\int_{\Omega}u_s^2 \frac{\eta^2}{s^2}dx\leq\int_{\Omega}u_s^2
|\nabla \eta|^2dx\end{equation}
for all $\eta\in C^1(\Omega\backslash \{s=0\})$ with compact support in $\Omega\backslash\{s=0\}$.

We claim now that inequality \eqref{ineqeta} is valid for each $\eta\in C^1(\Omega)$ with compact 
support in $\Omega$.
Namely, take any such function $\eta$, and let $\zeta_\delta$ be a smooth function satisfying 
$0\leq\zeta_\delta\leq 1$, $\zeta_\delta\equiv0$ in $\{s\leq \delta\}$, $\zeta_\delta\equiv 1$ 
in $\{s\geq 2\delta\}$, and $|\nabla\zeta_\delta|\leq C/\delta$. Applying \eqref{ineqeta} 
with $\eta$ replaced by $\eta\zeta_\delta$ (which is $C^1$ and has compact support in 
$\Omega\backslash\{s=0\}$),
we obtain
\begin{equation}\label{ineqdelta}(m-1)\int_{\Omega}u_s^2 \frac{\eta^2\zeta_\delta^2}{s^2}dx
\leq\int_{\Omega}u_s^2|\nabla(\eta\zeta_\delta)|^2dx.\end{equation}
Now, we find
\begin{eqnarray*}\int_{\Omega}u_s^2|\nabla(\eta\zeta_\delta)|^2dx&=&\int_{\Omega}u_s^2
\left\{|\nabla \eta|^2\zeta_\delta^2+\eta^2|\nabla\zeta_\delta|^2+2\eta\zeta_\delta 
\nabla\eta\nabla\zeta_\delta \right\}dx\\
&\leq& \int_{\Omega}u_s^2|\nabla \eta|^2\zeta_\delta^2dx+\frac{C}{\delta^2}\int_{\{\delta\leq 
s\leq 2\delta\}\cap\Omega} u_s^2dx\\
&\leq& \int_{\Omega}u_s^2|\nabla \eta|^2\zeta_\delta^2dx+C\delta^{m-2}\|u_s\|^2_{L^{\infty}
(\{\delta\leq s\leq 2\delta)},
\end{eqnarray*}
where $C$ denote different positive constants, and we have used that $\eta$ and $|\nabla\eta|$ 
are bounded.
Since $u_s$ is continuous in $\overline\Omega$ and $u_s=0$ on $\{s=0\}$ by \eqref{ustlip}, 
we have $\|u_s\|_{L^\infty(\{s\leq2\delta\})}\rightarrow0$ as $\delta\rightarrow0$. Recall 
also that $m-2\geq0$.
Therefore, letting $\delta\rightarrow0$ in
\eqref{ineqdelta} we obtain \eqref{ineqeta}, and our claim is proved.

Moreover, by approximation by $C^1(\Omega)$ functions with compact support in $\Omega$, 
we see that \eqref{ineqeta} is valid also for each $\eta\in {\rm Lip}(\Omega)$ with compact 
support in $\Omega$.

Let us set $\eta=\eta_{\epsilon}$ in \eqref{ineqeta},
where \[\eta_{\epsilon}=\left\{\begin{array}{ll}
s^{-\alpha}\rho  & \textrm{if }\ s>\epsilon\\ \epsilon^{-\alpha}\rho  & \textrm{if }\
s\leq\epsilon \end{array} \right. \qquad\textrm{and}\qquad
\rho =\left\{\begin{array}{ll} 0 & \textrm{in } \Omega_{\delta/3}\\ 1 & \textrm{in }
\Omega\backslash\Omega_{\delta/2}, \end{array} \right.\]
and $\rho$ is a smooth function. Note that $\eta_\epsilon\in\textrm{Lip}(\Omega)$ and has 
compact support in $\Omega$.
Then, since $\alpha^2<\frac12(\alpha^2+m-1)<m-1$,
\[
|\nabla\eta_{\epsilon}|^2\leq\left\{\begin{array}{ll}\frac12(\alpha^2+m-1)s^{-2\alpha-2}
\rho ^2 &\textrm{in }(\Omega\backslash \Omega_{\delta/2})\cap \{s>\epsilon\}
\vspace{1mm}\\
\frac12(\alpha^2+m-1)s^{-2\alpha-2}\rho ^2+Cs^{-2\alpha}&\textrm{in }\Omega_{\delta/2}
\cap\{s>\epsilon\}
\vspace{1mm}\\
C\epsilon^{-2\alpha}&\textrm{in }\Omega\cap\{s\leq\epsilon\},
\end{array}\right.
\]
we deduce from \eqref{ineqeta}
\[\frac{m-1-\alpha^2}{2}\int_{\Omega\cap\{s>\epsilon\}}u_s^2s^{-2\alpha-2}\rho ^2dx\leq
C\int_{\Omega_{\delta/2}\cap\{s>\epsilon\}}u_s^2s^{-2\alpha}dx+C\epsilon^{-2\alpha}
\int_{\Omega\cap \{s\leq \epsilon\}}u_s^2dx,\] where $C$ denote different constants
depending only on the quantities appearing in the statement of the lemma.
Note that we can bound the dependence of the constants in $m$ and $k$ by a constant 
depending on $n$, since for each $n$ there is a finite number of possible $m$ and $k$.
Now, since $u_s\in L^\infty(\Omega)$, the last term is bounded by $C\|u_s\|^2_{L^\infty}\epsilon^{m-2\alpha}$.
Making
$\epsilon\rightarrow 0$ and using that
\begin{equation}\label{ineqal} 2\alpha<2\sqrt{m-1}\leq m,\end{equation}
we deduce
\[\int_{\Omega}u_s^2s^{-2\alpha-2}\rho ^2dx\leq C\int_{\Omega_{\delta/2}}u_s^2s^{-2\alpha}dx.\]
Hence, since $\rho\equiv1$ in $\Omega\backslash \Omega_{\delta/2}$,
\begin{equation}\label{joquese}
\int_{\Omega\backslash\Omega_{\delta/2}}u_s^2s^{-2\alpha-2}dx\leq C\int_{\Omega_{\delta/2}}
u_s^2s^{-2\alpha}dx\leq C\int_{\Omega_{\delta/2}}u_s^2s^{-2\alpha-2}dx.
\end{equation}
{From} this we deduce that, for another constant $C$,
\begin{equation}\label{joquesemes}
\int_{\Omega}u_s^2s^{-2\alpha-2}dx\leq C\int_{\Omega_{\delta/2}}u_s^2s^{-2\alpha-2}dx.
\end{equation}

Let $0<\nu<1$ to be chosen later. On the one hand, using that $u_s\in \textrm{Lip}(\overline\Omega)$ 
and $u_s(0,t)=0$ (by \eqref{ustlip}), and that $\Omega$ is smooth, we deduce that
$|u_s(s,t)|\leq Cs^\nu\|u_s\|_{C^{0,\nu}(\overline{\Omega_{\delta/2}})}$ in $\Omega_{\delta/2}
\cap \{s<\delta\}$.
Moreover, since $-\Delta u=f(u)$ in $\Omega_\delta$ and $u|_{\partial\Omega}=0$, by $W^{2,p}$ estimates we have
$\|u\|_{C^{1,\nu}(\overline{\Omega_{\delta/2}})}\leq C\left(\|u\|_{L^{\infty}(\Omega_{\delta})}+
\|f(u)\|_{L^{\infty}(\Omega_{\delta})}\right)$.
It follows that
\[\|s^{-\nu}u_s\|_{L^{\infty}(\Omega_{\delta/2}\cap \{s<\delta\})}\leq
C\left(\|u\|_{L^{\infty}(\Omega_{\delta})}+\|f(u)\|_{L^{\infty}(\Omega_{\delta})}\right).\]
Thus, also in all $\Omega_{\delta/2}$ we have
\begin{equation}\label{c1nu}\|s^{-\nu}u_s\|_{L^{\infty}(\Omega_{\delta/2})}\leq
C\left(\|u\|_{L^{\infty}(\Omega_{\delta})}+\|f(u)\|_{L^{\infty}(\Omega_{\delta})}\right).
\end{equation}

On the other hand, recalling \eqref{ineqal} and taking $\nu$ sufficiently close to 1 such that 
$m-2\alpha-2+2\nu>0$, we will have
\[\int_{\Omega_{\delta/2}}u_s^2s^{-2\alpha-2}dx\leq \|s^{-\nu}u_s\|_{L^{\infty}
(\Omega_{\delta/2})}^2\int_{\Omega_{\delta/2}}s^{-2\alpha-2+2\nu}dx\leq
C\|s^{-\nu}u_s\|_{L^{\infty}(\Omega_{\delta/2})}^2.\]
Hence, using also \eqref{joquesemes} and \eqref{c1nu},
\[\int_{\Omega}u_s^2 s^{-2\alpha-2}dx\leq
C\left(\|u\|_{L^{\infty}(\Omega_{\delta})}+\|f(u)\|_{L^{\infty}(\Omega_{\delta})}\right)^2,\]
as claimed.
\end{proof}

Using Lemma \ref{lema} we can now establish Proposition \ref{prop1}.

\vspace{3mm}

\noindent \emph{Proof of Proposition \ref{prop1}.}
Using Lemma \ref{lema} and
making the change of variables 
\[\sigma =s^{2+\alpha},\ \ \tau=t^{2+\beta}\]
in the integral in \eqref{e}, one has
\[\left\{\begin{array}{lcr} s^{m-1}ds &=&c_\alpha\sigma ^{\frac{m}{2+\alpha}-1}d\sigma \\
t^{k-1}dt &=&c_\beta\tau^{\frac{k}{2+\beta}-1}d\tau,
\end{array}\right.\]
and thus,
\begin{equation} \int_{\widetilde U}\sigma ^{\frac{m}{2+\alpha}-1}\tau^{\frac{k}{2+\beta}-1}
(u_{\sigma }^2+u_{\tau}^2)d\sigma d\tau\leq C \left(\|u\|_{L^{\infty}(\Omega_{\delta})}+
\|f(u)\|_{L^{\infty}(\Omega_{\delta})}\right)^2.\label{sbarra}\end{equation}
Here, $\widetilde U$ denotes the image of the two
dimensional domain $\widetilde\Omega$ in \eqref{omtil} after the transformation
$(s,t)\mapsto(\sigma,\tau)$.
The constant in \eqref{sbarra} depends on $\alpha$ and $\beta$. However, later we will choose 
$\alpha$ and $\beta$ depending only on $m$ and $k$ and hence the constants will be controlled 
by constants depending only on $n$ (since for each $n$ there are a finite number of integers $m$ and $k$).

a) We assume $\Omega$ to be convex. Recall that in this case $\|u\|_{L^\infty}=u(0)$;
see Remark \ref{symm}.

From \eqref{sbarra}, setting $\rho =\sqrt{\sigma ^2+\tau^2}$ and taking into account that in
$\{\tau<\sigma <2\tau\}$ we have $\frac{\rho}{2}<\sigma<\rho$ and $\frac{\rho}{3}<\tau<\rho$,
we obtain
\begin{equation}\label{c1}\int_{\widetilde U\cap\{\tau<\sigma <2\tau\}}\rho ^{\frac{m}{2+\alpha}
+\frac{k}{2+\beta}-2}(u_{\sigma }^2
+u_{\tau}^2)d\sigma d\tau\leq C \left(\|u\|_{L^{\infty}(\Omega_{\delta})}+
\|f(u)\|_{L^{\infty}(\Omega_{\delta})}\right)^2.\end{equation}

Now, for each angle $\theta$ we have
\[u(0)\leq \int_{l_{\theta}}|\nabla_{(\sigma,\tau)} u|d\rho ,\]
where $l_{\theta}$ is the segment of angle $\theta$ in the $(\sigma,\tau)$-plane from the
origin to $\partial \widetilde U$.
Integrating in $\arctan\frac12<\theta<\arctan1=\frac{\pi}{4}$,
\begin{equation}\label{c2}u(0)\leq C\int_{\arctan\frac12}^{\frac{\pi}{4}}\int_{l_{\theta}}
|\nabla_{(\sigma,\tau)} u|d\rho d\theta=C\int_{\widetilde U\cap\{\tau<\sigma<2\tau\}}
\frac{|\nabla_{(\sigma,\tau)} u|}{\rho }
d\sigma d\tau.\end{equation}
Now, applying Schwarz's inequality and taking into account \eqref{c1} and \eqref{c2},
\[u(0)\leq C\left(\|u\|_{L^{\infty}(\Omega_{\delta})}+
\|f(u)\|_{L^{\infty}(\Omega_{\delta})}\right)\left(\int_{\widetilde U\cap\{\tau<\sigma<2\tau\}}
\rho^{-\left(\frac{m}{2+\alpha}+\frac{k}{2+\beta}\right)}d\sigma d\tau\right)^{1/2}.\]
This integral is finite when \[\frac{m}{2+\alpha}+\frac{k}{2+\beta}<2.\]
Therefore, if
\begin{equation}\label{cond}\frac{m}{2+\sqrt{m-1}}+\frac{k}{2+\sqrt{k-1}}<2\end{equation}
then we can choose $\alpha<\sqrt{m-1}$ and $\beta<\sqrt{k-1}$ such that the integral is finite.
Hence, since $\|u\|_{L^{\infty}(\Omega)}=u(0)$, if condition \eqref{cond} is satisfied then
\[\|u\|_{L^{\infty}(\Omega)}\leq C\left(\|u\|_{L^{\infty}(\Omega_{\delta})}+
\|f(u)\|_{L^{\infty}(\Omega_{\delta})}\right).\]

Let \[q_{m,k}=\frac{m}{2+\sqrt{m-1}}+\frac{k}{2+\sqrt{k-1}}.\]
If $n\leq7$ then by Remark \ref{qmkconvex} we have that $q_{m,k}\leq q_{\frac{n}{2},\frac{n}{2}}
\leq q_{\frac72,\frac72}<2$ (note that
the function $q=q(x)$ in the remark is increasing in $x$).
Instead, if $n\geq8$ then $q_{m,k}\geq q_{2,n-2}\geq q_{2,6}>2$. Hence, \eqref{cond} is satisfied if
and only if $n\leq 7$.

\vspace{3mm}

b) We assume that $\Omega$ is convex and that $n\geq8$.
Note that $q_{\frac n2,\frac n2}=\frac{n}{2+\sqrt{\frac n2-1}}<\frac n2$, and thus
\[p_{m,k}>2+\frac{4}{\frac n2 -2}=\frac{2n}{n-4}.\]
Hence, without loss of generality we may assume that
\[\frac{2n}{n-4}\leq p<p_{m,k}\]
and we can choose nonnegative numbers $\alpha$ and
$\beta$ such that $\alpha^2<m-1$, $\beta^2<k-1$, and
\begin{equation}\label{creix}p=2+\frac{4}{\frac{m}{2+\alpha}+\frac{k}{2+\beta}-2}.\end{equation}
This is because the
expression \eqref{creix} is increasing in $\alpha$ and $\beta$, and its value for
$\alpha=\beta=0$ is $\frac{2n}{n-4}$.
In addition, since $q_{m,k}\geq q_{2,n-2}\geq q_{2,6}>2$, we have
that $\frac{m}{2+\alpha}+\frac{k}{2+\beta}-2>0$ and that one of the
numbers $\frac{m}{2+\alpha}-1$ or $\frac{k}{2+\beta}-1$ is positive.

Hence, we can apply now Proposition \ref{sobx^A} to $u=u(\sigma,\tau)$ with
$a=\frac{m}{2+\alpha}-1$, $b=\frac{k}{2+\beta}-1$ and $q=2<D=\frac{m}{2+\alpha}+\frac{k}{2+\beta}$. 
We deduce that
\[\left(\int_{\widetilde U}\sigma ^{\frac{m}{2+\alpha}-1}\tau^{\frac{k}{2+\beta}-1}|u|^p
d\sigma d\tau\right)^{1/p}\leq C\left(\int_{\widetilde U}\sigma ^{\frac{m}{2+\alpha}-1}
\tau^{\frac{k}{2+\beta}-1}|\nabla_{(\sigma,\tau)}u|^2
d\sigma d\tau\right)^{1/2}.\]
Here we have extended $u$ by zero outside $\widetilde U$, obtaining a nonnegative Lipschitz function.
By Remark \ref{symm} it satisfies $u_s<0$ and $u_t<0$ whenever $u>0$, $s>0$, and $t>0$ since $\Omega$ is convex,
and therefore $u_{\sigma}<0$ and $u_{\tau}<0$ whenever $u>0$, $\sigma>0$, and $\tau>0$.
Note also that $q^*=2^*=\frac{2D}{D-2}=2+\frac{4}{D-2}=p$.
Thus, combining the last inequality with \eqref{sbarra}, we have
\[\left(\int_{\widetilde U}\sigma ^{\frac{m}{2+\alpha}-1}\tau^{\frac{k}{2+\beta}-1}|u|^p
d\sigma d\tau\right)^{1/p}\leq C \left(\|u\|_{L^{\infty}(\Omega_{\delta})}+
\|f(u)\|_{L^{\infty}(\Omega_{\delta})}\right).\]

Finally, since
\[\int_{\widetilde U}\sigma ^{\frac{m}{2+\alpha}-1}\tau^{\frac{k}{2+\beta}-1}|u|^pd\sigma d\tau=
c_{\alpha,\beta}\int_{\widetilde \Omega}s^{m-1}t^{k-1}|u|^pdsdt=c_{\alpha,\beta,m,k}\|u\|_{L^p(\Omega)}^p,\]
we conclude 
\[\|u\|_{L^p(\Omega)}\leq C\left(\|u\|_{L^{\infty}(\Omega_{\delta})}
+\|f(u)\|_{L^{\infty}(\Omega_{\delta})}\right).\]

\vspace{3mm}

c) Here we do not assume $\Omega$ to be convex. We set $\alpha=0$ in Lemma \ref{lema}. 
Estimate \eqref{joquese} in its proof gives
\[\int_{\Omega\backslash \Omega_{\delta/2}}u_s^2s^{-2}dx\leq
C\int_{\Omega_{\delta/2}}u_s^2dx,\]
and therefore, for a different constant $C$,
\[\int_{\Omega}u_s^2dx\leq C\int_{\Omega_{\delta/2}}u_s^2dx.\]
Since, for $1\leq i\leq m$ and $m+1\leq j\leq n$, $u_{x_i}=u_s\frac{x_i}{s}$ and $u_{x_j}=u_t\frac{x_j}{t}$, 
this leads to
\[\|u\|_{H^1_0(\Omega)}\leq C\|\nabla u\|_{L^2(\Omega)}\leq C\|u\|_{H^1(\Omega_\delta)},\]
as claimed.\qed

\section{Regularity of the extremal solution}\label{extremal}

This section is devoted to give the proof of Theorem \ref{th1}.
The estimates for convex domains will follow easily from Proposition \ref{prop1} and
the boundary estimates in convex domains of de Figueiredo, Lions, and Nussbaum \cite{DLN}.
These boundary estimates (see also \cite{C4} for their proof) follow easily from the 
moving planes method \cite{GNN}.

\begin{thm}[\cite{DLN},\cite{GNN}]\label{boundary} Let $\Omega$ be a smooth, bounded,
and convex domain, $f$ be any Lipschitz function, and $u$ be a bounded positive solution
of \eqref{pbl}. Then, there exist constants $\delta>0$ and $C$, both depending only on
$\Omega$, such that \[\|u\|_{L^{\infty}(\Omega_{\delta})}\leq C\|u\|_{L^1(\Omega)},\]
where $\Omega_{\delta}=\{x\in\Omega: {\rm dist}(x,\partial\Omega)<\delta\}$.
\end{thm}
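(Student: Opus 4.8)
The plan is to apply the moving planes method of Gidas--Ni--Nirenberg \cite{GNN} near $\partial\Omega$, using the convexity of $\Omega$ in an essential way, and then to close the estimate by an elementary averaging argument. First, since $f$ is Lipschitz and $u$ is bounded, standard elliptic regularity gives $u\in W^{2,p}(\Omega)\cap C^{1,\nu}(\overline\Omega)$ for all $p<\infty$ and $0<\nu<1$, which is more than enough to run the method: for any hyperplane $T$, writing $u^T$ for the reflection of $u$ across $T$, the function $w=u-u^T$ solves $-\Delta w=f(u)-f(u^T)=c_T(x)\,w$ in the relevant cap, where $c_T(x)=(f(u(x))-f(u^T(x)))/(u(x)-u^T(x))$ satisfies $\|c_T\|_{L^\infty}\le\mathrm{Lip}(f)$. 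Thus the maximum principle and Hopf's lemma apply to $w$ with no further hypothesis on $f$ beyond being Lipschitz, and since $u>0$ in $\Omega$ with $u=0$ on $\partial\Omega$, the reflection procedure starts.

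The key geometric consequence --- this is where convexity enters and is the only nontrivial point --- is the following: there are constants $\delta>0$, a solid angle $\omega_0>0$, and $c_0>0$, all depending only on $\Omega$, such that for every $x\in\Omega_\delta$ there is a cone of directions $\mathcal C_x$ of solid angle at least $\omega_0$, roughly centered at the inner unit normal at the point of $\partial\Omega$ nearest to $x$, for which
\[u(x)\le u(x-te)\qquad\text{whenever } e\in\mathcal C_x,\ 0<t<\delta,\ \text{and } x-te\in\Omega .\]
Indeed, for each such direction $e$ one slides the hyperplane $T_{\lambda,e}=\{y: y\cdot e=\lambda\}$ inward from outside $\Omega$; the usual moving planes argument yields $u(y)\le u(y^{\lambda,e})$ for $y$ in the moving cap and $y^{\lambda,e}$ its reflection across $T_{\lambda,e}$, and for a convex domain this remains valid at least until the plane has travelled a fixed distance $\delta=\delta(\Omega)>0$ past any given boundary point --- the procedure can only stop when the reflected cap touches $\partial\Omega$ or when a boundary point of the cap has outward normal orthogonal to $e$, and by convexity neither occurs within distance $\delta$ of $\partial\Omega$ once $\delta$ is small. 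Taking $x$ in the cap and letting $(\lambda,e)$ range over the admissible set produces the displayed inequality. Setting $\Gamma_x=\{x-te:\ e\in\mathcal C_x,\ 0<t<\delta,\ x-te\in\Omega\}$, a direct computation with this cone --- which, after possibly shrinking $\delta$, points essentially into $\Omega$ --- shows $|\Gamma_x|\ge c_0>0$ uniformly for $x\in\Omega_\delta$.

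Granting this, the theorem follows at once: since $u\ge0$ and $u(x)\le u(y)$ for every $y\in\Gamma_x$,
\[u(x)\,|\Gamma_x|=\int_{\Gamma_x}u(x)\,dy\le\int_{\Gamma_x}u(y)\,dy\le\int_{\Omega}u\,dy=\|u\|_{L^1(\Omega)},\]
so $u(x)\le c_0^{-1}\|u\|_{L^1(\Omega)}$ for all $x\in\Omega_\delta$, which is the assertion with $C=c_0^{-1}$. The one genuinely delicate step is the second paragraph: making the moving planes procedure produce constants $\delta$, $\omega_0$, $c_0$ depending only on the geometry of the convex domain, in particular dealing with flat or non-strictly convex portions of $\partial\Omega$. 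This is exactly the content of \cite{DLN}, reproved following \cite{GNN} in \cite{C4}; the remaining ingredients --- the regularity of $u$, the linear reformulation of the comparison, and the final averaging --- are routine.
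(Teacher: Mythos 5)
The paper does not prove Theorem~\ref{boundary}: it is imported directly from \cite{DLN} and \cite{GNN}, with the remark that a proof via moving planes may also be found in \cite{C4}. Your reconstruction is therefore not being compared against a proof in this paper but against the cited sources, and on that basis it is correct and is essentially the argument of de~Figueiredo--Lions--Nussbaum: moving planes produce, for every near-boundary point, a cone of monotonicity with opening angle and length controlled purely by the geometry of the convex domain, and the final estimate follows by integrating $u(x)\le u(y)$ over the cone. You also rightly locate the nontrivial content --- obtaining $\delta$, $\omega_0$, $c_0$ uniformly for a general (not strictly) convex domain --- and defer it to \cite{DLN}, which is exactly what the paper does.

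One small slip you should fix: you say the cone of directions $\mathcal C_x$ is ``roughly centered at the inner unit normal'' while simultaneously writing the monotonicity as $u(x)\le u(x-te)$ and insisting that $\Gamma_x=\{x-te\}$ points into $\Omega$. These are inconsistent: if $e$ lies near the inner normal then $x-te$ exits $\Omega$. Either center $\mathcal C_x$ near the \emph{outer} normal and keep $u(x)\le u(x-te)$, or center it near the inner normal and write $u(x)\le u(x+te)$. Also, when you invoke the maximum principle for $-\Delta w=c_T w$ with $\|c_T\|_\infty\le\mathrm{Lip}(f)$, the sign of $c_T$ is not controlled, so the comparison does not follow from boundedness of $c_T$ alone; it is the narrowness (small measure) of the initial cap that makes the maximum principle and Hopf lemma applicable. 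This is standard in \cite{GNN} and implicitly part of what you defer, but it is worth flagging since your sentence suggests boundedness of $c_T$ suffices by itself.
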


We can now give the proof of Theorem \ref{th1}. The main part of the proof are the estimates
for non-convex domains. They will be proved by interpolating the $W^{1,p}$ and $W^{2,p}$
estimates of Nedev \cite{N} and our estimate of Lemma \ref{lema}, and by applying the
classical Sobolev inequality as explained in Remark \ref{remsob}.

\vspace{3mm}

\noindent \emph{Proof of Theorem \ref{th1}.} As we have pointed out, the estimates for
convex domains are a consequence of Proposition \ref{prop1} and Theorem \ref{boundary}.
Namely, we can apply the estimates of Proposition \ref{prop1} to the bounded and semi-stable minimal solutions
$u_\lambda$ of \eqref{pb} for $\lambda<\lambda^*$, and then by monotone convergence
the estimates hold for the extremal solution $u^*$. Note that $\|u_\lambda\|_{L^1(\Omega)}
\leq \|u^*\|_{L^1(\Omega)}<\infty$ for all $\lambda<\lambda^*$.

To prove part c) for convex domains, we use part c) of Proposition \ref{prop1} with $\delta$ 
replaced by $\delta/2$ and $\delta$ given by Theorem \ref{boundary}. We then control 
$\|u\|_{H^1(\Omega_{\delta/2})}$ by $\|u\|_{L^{\infty}(\Omega_\delta)}+\|f(u)\|_{L^{\infty}(\Omega_\delta)}$ 
using boundary estimates. Finally, we use Theorem \ref{boundary}.
Next we prove the estimates in parts a) and c) for non-convex domains.

We start by proving part a) when $\Omega$ is not convex. We have that $n=4$, i.e. $m=k=2$.
In \cite{N} (see its Remark 1) it is proved that the extremal solution satisfies
$u^*\in W^{1,p}(\Omega)$ for all $p<\frac{n}{n-3}$. Thus, since $n=4$, for each $p<4$ we have
\[\int_{\Omega}|u^*_s|^{p}dx\leq C\qquad \textrm{and} \qquad \int_{\Omega}|u^*_t|^pdx\leq C.\]

Assume that $\|u^*\|_{L^\infty(\Omega_\delta)}\leq C$ for some $\delta>0$ ---which we will prove later. Then,
by Lemma~\ref{lema}, for all $\gamma<4$ we have
\[\int_{\Omega}s^{-\gamma}|u_s^*|^2dx\leq C\qquad \textrm{and}\qquad \int_\Omega
t^{-\gamma}|u^*_t|^2dx\leq C.\]
Hence, for each $\lambda\in[0,1]$,
\[\int_\Omega (s^{-\lambda\gamma}|u^*_s|^{p-\lambda(p-2)}+t^{-\lambda\gamma}
|u^*_t|^{p-\lambda(p-2)})dx\leq C.\]
Setting now $\sigma=s^\kappa$, $\tau=t^\kappa$, and 
\[\kappa=1+\frac{\lambda\gamma}{p-\lambda(p-2)},\]
we obtain
\[\int_{\widetilde U} \sigma^{\frac{2}{\kappa}-1}\tau^{\frac{2}{\kappa}-1}|\nabla_{(\sigma,\tau)}
u^*|^{p-\lambda(p-2)}d\sigma d\tau\leq C,\]
and taking $p=3$, $\gamma=3$ and $\lambda=3/4$ (and thus $\kappa=2$), we obtain 
\[\int_{\widetilde U} |\nabla_{(\sigma,\tau)}
u^*|^{9/4}d\sigma d\tau\leq C.\]
Finally, applying Sobolev's inequality in the 2 dimensional plane $(\sigma,\tau)$,
$u^*\in L^\infty(\Omega)$.

It remains to prove that $\|u^*\|_{L^\infty(\Omega_\delta)}\leq C$ for some $\delta>0$. Since $u^*\in W^{1,p}(\Omega)$
for every $p<4$, we have
\[\int_{\Omega_\delta}st|\nabla u^*|^pdsdt\leq C.\]
Since the domain is smooth, we must have $0\notin\partial\Omega$ (otherwise the boundary
would have an isolated point) and hence, there exist $r_0>0$ and $\delta>0$ such that
$\Omega_\delta \cap B_{r_0}(0)=\emptyset$.
Thus, $s\geq r_0/\sqrt2$ in $\Omega_\delta\cap \{s>t\}$ and $t\geq r_0/\sqrt2$ in 
$\Omega_\delta\cap \{s<t\}$. It follows that
\[\int_{\Omega_\delta\cap \{s>t\}}t|\nabla u^*|^pdsdt\leq C\qquad \textrm{and}\qquad
\int_{\Omega_\delta\cap \{s<t\}}s|
\nabla u^*|^pdsdt\leq C.\]
Taking $p\in(3,4)$, we can apply Sobolev's inequality in dimension 3 (as explained in Remark \ref{remsob}),
to obtain $u^*\in L^\infty(\Omega_\delta \cap\{s>t\})$ and $u^*\in L^\infty(\Omega_\delta 
\cap\{s<t\})$.
Note that $u^*$ does not vanish through all $\partial(\Omega_\delta\cap\{s>t\})$ and 
$\partial(\Omega_\delta\cap\{s<t\})$,
but it vanishes on their intersection with 
$\partial\Omega$ ---a sufficiently large part of $\partial(\Omega_\delta\cap\{s>t\})$ 
and $\partial(\Omega_\delta\cap\{s<t\})$
to apply the Sobolev inequality.
Therefore $u^*\in L^\infty(\Omega_\delta)$, as claimed.

\vspace{3mm}

To prove part c) in the non-convex case, let $n\leq 6$. By Proposition \ref{prop1}, it suffices
to prove that $u^*\in H^1(\Omega_\delta)$ for some $\delta>0$. Take $r_0$ and $\delta$ such that
$\Omega_\delta \cap B_{r_0}(0)=\emptyset$, as in part a).

In \cite{N} it is proved that $u^*\in W^{2,p}(\Omega)$ for $p<\frac{n}{n-2}$. Thus, 
by the previous lower bounds for $s$ and $t$
in $\{s>t\}$ and $\{s<t\}$ respectively,
\[\int_{\Omega_\delta\cap \{s>t\}}t^{k-1}|D^2 u^*|^pdsdt\leq C\qquad\textrm{and}\qquad \int_{\Omega_\delta\cap
\{s<t\}}s^{m-1}|D^2 u^*|^pdsdt\leq C.\]
Since $n\leq 6$, $m\geq2$, and $k\geq2$, we have that $k\leq4$ and $m\leq 4$. It follows that
$\frac{2k+2}{k+3}<\frac{n}{n-2}$ and $\frac{2m+2}{m+3}<\frac{n}{n-2}$. 
Thus, we may take $p=\frac{2k+2}{k+3}$ and $p=\frac{2m+2}{m+3}$ respectively in the two previous estimates.
Now applying Sobolev's
inequality in dimension $k+1$ and $m+1$ respectively, we obtain
$\nabla u^*\in L^2(\Omega_\delta \cap\{s>t\})$ and $\nabla u^*\in L^2(\Omega_\delta \cap\{s<t\})$. Therefore,
$u^*\in H^1(\Omega_\delta)$.\qed

\section{Weighted Sobolev inequality}\label{sobolev}

It is well known that the classical Sobolev inequality can be deduced from the isoperimetric inequality.
This is done by applying first the isoperimetric inequality to the level sets of the function and
then using the coarea formula. In this way one deduces the Sobolev inequality with exponent 1 on
the gradient. Then, by applying
H\"older's inequality one deduces the general Sobolev inequality.
Here, we will proceed in this way to prove the Sobolev inequality of Proposition \ref{sobx^A}.

Recall that we will apply this Sobolev inequality to the function $u$ defined on the $(\sigma,\tau)$-plane, 
where $\sigma=s^{2+\alpha}$ and $\tau=t^{2+\beta}$.
Recall also that this application will be in convex domains, and thus $u$ satisfies the hypothesis of 
Proposition \ref{sobx^A}, i.e., $u_\sigma\leq0$ and $u_\tau\leq0$, with strict inequality whenever $u>0$. 
Hence, since the isoperimetric inequality will be applied to the level sets of $u$, it suffices
to prove a weighted isoperimetric inequality for bounded domains $\widetilde U\subset(\mathbb R_+)^2=(0,\infty)^2$
satisfying the following property:
\begin{itemize}
\item[(P)] For all $(\sigma,\tau)\in\widetilde U$, $\widetilde U(\cdot,\tau):=\{\sigma'>0:
(\sigma',\tau)\in\widetilde U\}$ and $\widetilde U(\sigma,\cdot):=\{\tau'>0: (\sigma,\tau')\in\widetilde U\}$ 
are intervals which are strictly decreasing in $\tau$ and $\sigma$, respectively.
\end{itemize}

We denote
\[m(\widetilde U)=\int_{\widetilde U} \sigma^a\tau^bd\sigma d\tau\qquad \mbox{and}\qquad m(\partial\widetilde 
U\cap (\mathbb R_+)^2)=
\int_{\partial\widetilde U\cap(\mathbb R_+)^2}\sigma^a\tau^bd\sigma d\tau.\]
Note that in the weighted perimeter $m(\partial \widetilde U \cap (\mathbb R_+)^2)$ the part of 
$\partial\widetilde U$ on the $\sigma$ and $\tau$ coordinate axes is not counted.
The following isoperimetric inequality holds in domains satisfying property (P) above, under no further 
regularity assumption on them.

\begin{prop}\label{isop} Let $\widetilde U\subset (\mathbb R_+)^2$ be a bounded domain
satisfying (P) above, $a>-1$ and $b>-1$ be real numbers, being positive at least
one of them, and \[D=a+b+2.\]

Then, there exists a constant $C$ depending only on $a$ and $b$ such that
\[m(\widetilde U)^{\frac{D-1}{D}}\leq Cm(\partial\widetilde U\cap(\mathbb R_+)^2).\]
\end{prop}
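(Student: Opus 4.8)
The plan is to reduce the weighted isoperimetric inequality to a one-dimensional calculus statement using property (P), which says that $\widetilde U$ is, after reflection, a "decreasing" region — equivalently, there is a nonincreasing function $g:(0,\infty)\to[0,\infty]$ with $\widetilde U=\{(\sigma,\tau):\sigma>0,\ 0<\tau<g(\sigma)\}$ (up to null sets), and symmetrically $\widetilde U=\{(\sigma,\tau):\tau>0,\ 0<\sigma<h(\tau)\}$ where $h=g^{-1}$ in the appropriate generalized sense. The key geometric feature is that the part of $\partial\widetilde U$ lying in the open quadrant $(\mathbb R_+)^2$ is exactly the graph of $g$ (the "outer" boundary), while the parts on the axes are discarded from $m(\partial\widetilde U\cap(\mathbb R_+)^2)$. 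So one must bound the weighted area $m(\widetilde U)=\int_{\widetilde U}\sigma^a\tau^b\,d\sigma\,d\tau$ from above by a constant times the weighted length $\int_{\text{graph }g}\sigma^a\tau^b\,d\mathcal H^1$.

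First I would set up the reduction cleanly: writing $G(\sigma)=\frac{g(\sigma)^{b+1}}{b+1}$ so that $\int_0^{g(\sigma)}\tau^b\,d\tau=G(\sigma)$, we get $m(\widetilde U)=\int_0^\infty \sigma^a G(\sigma)\,d\sigma$. For the perimeter, since along the graph $d\mathcal H^1=\sqrt{1+g'(\sigma)^2}\,d\sigma\ge \max(1,|g'(\sigma)|)\,d\sigma$, we have the two lower bounds $m(\partial\widetilde U\cap(\mathbb R_+)^2)\ge \int_0^\infty \sigma^a g(\sigma)^b\,d\sigma$ and $m(\partial\widetilde U\cap(\mathbb R_+)^2)\ge \int_0^\infty \sigma^a g(\sigma)^b|g'(\sigma)|\,d\sigma$ (for the latter one should be a bit careful and interpret $g$ as a nonincreasing BV function, so $g'\le 0$ is a measure; the graph has a well-defined $\mathcal H^1$-measure and these inequalities persist, possibly needing an approximation of $\widetilde U$ by smoother decreasing regions). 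Now the strategy is to interpolate between these two bounds: by homogeneity considerations, for any $\sigma_0>0$ split $m(\widetilde U)=\int_0^{\sigma_0}+\int_{\sigma_0}^\infty$, bound $\tau\le g(\sigma)\le g(\sigma_0)$ on the appropriate piece using monotonicity, and optimize over $\sigma_0$ — or, cleaner, prove it directly via a weighted one-dimensional Hardy/Bliss-type inequality.

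The most efficient route, which I would actually carry out, is the following pointwise trick exploiting that $a>-1$, $b>-1$ and (say) $a>0$. Since $g$ is nonincreasing, for every $\sigma>0$ we have the two elementary estimates
\begin{equation*}
\sigma^{a+1}g(\sigma)^{b+1}\le (a+1)\int_0^\sigma \rho^a g(\rho)^{b+1}\,d\rho\Big/ g(\sigma)^{?}\ \ \text{— better: }\ \sigma^{a+1}\le (a+1)\!\!\int_0^\sigma\!\rho^a\,d\rho,\ \ g(\sigma)^{b+1}\le -(b+1)\!\!\int_\sigma^\infty\!\! g(\rho)^b g'(\rho)\,d\rho,
\end{equation*}
so that $\big(\sigma^{a+1}g(\sigma)^{b+1}\big)\le C\big(\int_0^\infty \rho^a g(\rho)^b\,d\rho\big)\big(\int_0^\infty \rho^a g(\rho)^b|g'(\rho)|\,d\rho\big)^{?}$; combined with $m(\widetilde U)=\int\sigma^a G(\sigma)\,d\sigma\le \sup_\sigma\big(\sigma^{a+1}g(\sigma)^{b+1}\big)^{1-1/D}\cdot(\text{something})$ one arrives, after choosing exponents so everything is homogeneous of the right degree, at $m(\widetilde U)^{(D-1)/D}\le C\,m(\partial\widetilde U\cap(\mathbb R_+)^2)$. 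The bookkeeping of exponents — checking that the product of the two perimeter-type integrals raised to the correct fractional powers reproduces exactly $m(\partial\widetilde U\cap(\mathbb R_+)^2)^{(D-1)/D}$ and that the scaling $(\sigma,\tau)\mapsto(\lambda\sigma,\mu\tau)$ is respected — is the only real computation, and it is routine once $D=a+b+2$ is recognized as the effective dimension.

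The main obstacle I anticipate is not the one-dimensional estimate itself but the \emph{regularity reduction}: property (P) is assumed with no smoothness, so $\partial\widetilde U\cap(\mathbb R_+)^2$ and its weighted measure $m(\partial\widetilde U\cap(\mathbb R_+)^2)$ must first be given meaning, and one must justify passing from a general decreasing region to one whose outer boundary is a nice (Lipschitz, or at least BV-graph) curve without increasing the area or decreasing the perimeter — e.g. by approximating $\widetilde U$ from inside by finite unions of axis-parallel rectangles arranged in a staircase (which are decreasing regions), for which everything is elementary, and then taking a limit. A secondary subtlety is the degenerate case where one of $a,b$ lies in $(-1,0]$: then the corresponding marginal integral $\int_0^\sigma\rho^a\,d\rho$ still converges (since $a>-1$), so the argument goes through, but one uses the \emph{positive} exponent (say $a>0$, guaranteed by hypothesis) to control the behavior near the origin and the other near infinity — I would simply fix $a>0$ without loss of generality by symmetry of the statement in $(\sigma,a)\leftrightarrow(\tau,b)$.
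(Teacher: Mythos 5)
Your setup matches the paper's: reduce via property (P) to a single decreasing graph function $\psi$ on $(0,\bar\sigma)$, write $m(\widetilde U)=\frac{1}{b+1}\int_0^\infty\sigma^a\psi^{b+1}d\sigma$ and $m(\partial\widetilde U\cap(\mathbb R_+)^2)=\int_0^\infty\sigma^a\psi^b\sqrt{1+\psi'^2}\,d\sigma$, and exploit $\sqrt{1+\psi'^2}\ge\max(1,|\psi'|)$ to get the two lower bounds $P_1=\int\sigma^a\psi^b\,d\sigma$ and $P_2=\int\sigma^a\psi^b|\psi'|\,d\sigma$ for the perimeter. That part is right, and the regularity worry you emphasize is in fact minor: the \emph{strict} monotonicity in (P) forces $\psi$ to be continuous (a jump would make a range of horizontal slices constant), and a bounded monotone function is automatically $W^{1,1}$, which is all that is needed to make sense of the graph length.

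The real gap is that the crucial step -- deducing $m(\widetilde U)^{(D-1)/D}\le CP$ from $P_1$ and $P_2$ -- is not carried out. Your ``pointwise trick'' is left with literal question marks, and the route you sketch does not close. Indeed, a scaling check under $(\sigma,\tau)\mapsto(\lambda\sigma,\mu\tau)$ shows that a bound $\sup_\sigma\sigma^{a+1}\psi^{b+1}\le C\,P_1^{\theta_1}P_2^{\theta_2}$ forces $\theta_1+\theta_2=D/(D-1)>1$; and even granting such a bound, passing from a sup of $\sigma^{a+1}\psi^{b+1}$ to the integral $\int\sigma^a\psi^{b+1}d\sigma$ requires an additional integrable factor that is not controlled by the perimeter -- the bound $m(\widetilde U)\le C\sup_\sigma\sigma^{a+1}\psi^{b+1}$ is false (take $\psi(\sigma)=\min(1,\sigma^{-(a+1)/(b+1)})$ on a long interval). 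What the paper actually does is different and much cleaner. Using $\psi'\le0$ and $a>0$, it bounds $\sqrt{1+\psi'^2}\ge c\bigl(1-\tfrac{b+1}{a}\psi'\bigr)$ and integrates by parts; the specific coefficient $\tfrac{b+1}{a}$ is chosen so that the two resulting terms combine into $c\int_0^\infty\sigma^a\psi^{b+1}\bigl(\tfrac{1}{\psi}+\tfrac{1}{\sigma}\bigr)d\sigma$. It then introduces the normalization $\mu>0$ defined by $m(\widetilde U)=\mu^D/[(a+1)(b+1)]$ and observes that necessarily $\psi(\sigma)<\mu$ for $\sigma>\mu$ (otherwise the rectangle $(0,\mu)\times(0,\mu)$ alone would already exceed the prescribed weighted area), which gives $\tfrac{1}{\psi}+\tfrac{1}{\sigma}\ge\tfrac{1}{\mu}$ on all of $(0,\infty)$ and hence $P\ge\tfrac{c}{\mu}m(\widetilde U)=c\,m(\widetilde U)^{(D-1)/D}$. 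Both the integration-by-parts producing the factor $\tfrac{1}{\psi}+\tfrac{1}{\sigma}$ and the $\mu$-normalization are absent from your outline; without them, or some genuine substitute such as an actual Bliss-type interpolation inequality stated and proved, the argument does not close.
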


\begin{proof} First, by symmetry we can suppose $a>0$.

Property (P) ensures that there exists a unique well defined decreasing, bounded, and continuous function
$\psi:(0,\overline\sigma)\rightarrow (0,\infty)$ for some $\overline\sigma>0$ such that
\begin{equation}\label{fcngraph}\widetilde U=\{(\sigma,\tau)\in(\mathbb R_+)^2: \tau<\psi(\sigma)\}.\end{equation}
In addition, extending $\psi$ by zero in $[\overline\sigma,\infty)$, $\psi$ is continuous and nonincreasing.
Even that we could have $\psi'=-\infty$ at some points, $|\psi'|=-\psi'$ is integrable (since $\psi$ is bounded) 
and thus $\psi\in W^{1,1}(\mathbb R)$. We have that
\[m(\widetilde U)=\frac{1}{b+1}\int_0^{+\infty} \sigma^a\psi^{b+1}d\sigma\ \textrm{ and }\ m(\partial
\widetilde U\cap(\mathbb R_+)^2)=\int_0^{+\infty}\sigma^a\psi^b
\sqrt{1+\psi'^2}d\sigma.\]

Let $\mu>0$ be such that 
\begin{equation}\label{valmu}m(\widetilde U)=\frac{\mu^D}{(a+1)(b+1)}.\end{equation}
We claim that
\[\psi(\sigma)<\mu\ \textrm{ for }\ \sigma>\mu.\]
Assume that this is false. Then, we would have $\psi(\sigma')\geq\mu$
for some $\sigma'>\mu$, and hence
\[m(\widetilde U)\geq\frac{1}{b+1}\int_0^{\sigma'}\sigma^a\psi^{b+1}d\sigma>\frac{1}{b+1}\int_0^\mu
\sigma^a\mu^{b+1}d\sigma=\frac{\mu^D}{(a+1)(b+1)},\]
a contradiction.
On the other hand, since $a>0$, $b+1>0$, and $\psi'\leq0$,
\begin{eqnarray*} m(\partial\widetilde U\cap(\mathbb R_+)^2)&=&\int_0^{+\infty}\sigma^a\psi^b\sqrt{1+\psi'^2}d\sigma\\
&\geq& c\int_0^{+\infty}\sigma^a\psi^b\left\{1-\frac{b+1}{a}\psi'\right\}d\sigma \\
& = & c\int_0^{+\infty}\sigma^a\left\{\psi^b-\frac{d}{d\sigma}\left(\frac{\psi^{b+1}}{a}\right)\right\}d\sigma\\
& = & c\int_0^{+\infty}\sigma^a\psi^{b+1}\left(\frac{1}{\psi}+\frac{1}{\sigma}\right)d\sigma,\end{eqnarray*}
for some constant $c$ depending only on $a$ and $b$.

Finally, taking into account that $\psi(\sigma)<\mu$ for $\sigma>\mu$, we obtain that $\frac{1}{\psi}
+\frac{1}{\sigma}\geq \frac{1}{\mu}$ for each $\sigma>0$. Thus, recalling \eqref{valmu},
\[m(\partial\widetilde U\cap(\mathbb R_+)^2)\geq c\int_0^{+\infty}\sigma^a\psi^{b+1}\left(\frac{1}{\psi}+
\frac{1}{\sigma}\right)d\sigma
\geq \frac{c}{\mu} m(\widetilde U)=cm(\widetilde U)^{\frac{D-1}{D}},\] as claimed.
\end{proof}

Now we are able to prove our Sobolev inequality from the previous isoperimetric inequality. We follow the 
proof given in \cite{D} for the classical unweighted case.

\vspace{3mm}

\noindent \emph{Proof of Proposition \ref{sobx^A}.} We will prove first the case $q=1$.

Letting $\chi_A$ denote the characteristic function of the set $A$, we have
\[u(\sigma,\tau)=\int_0^{+\infty}
\chi_{[u(\sigma,\tau)>\lambda]}d\lambda.\]
Thus, by Minkowski's integral inequality
\begin{eqnarray*}\left(\int_{(\mathbb R_+)^2}\sigma^a\tau^b|u|^{\frac{D}{D-1}}d\sigma d\tau\right)^{\frac{D-1}{D}}
&\leq& \int_0^{+\infty}\left(\int_{(\mathbb R_+)^2}\sigma^a\tau^b
\chi_{[u(\sigma,\tau)>\lambda]}d\sigma d\tau\right)^{\frac{D-1}{D}}d\lambda\\ &=&\int_0^{+\infty}
m(\{u(\sigma,\tau)>\lambda\})^{\frac{D-1}{D}}d\lambda.\end{eqnarray*}
Since $u_\sigma\leq0$ and $u_\tau\leq0$, with strict inequality when $u>0$, the level sets
$\{u(\sigma,\tau)>\lambda\}$ satisfy property (P) in the beginning of Section 4. In fact, since $u_{\tau}<0$ 
at points where $u=\lambda>0$, the implicit function theorem gives that the function $\psi$ in \eqref{fcngraph} 
when $\widetilde U=\{u(\sigma,\tau)>\lambda\}$ is $C^1$ in $(0,\overline\sigma)$. Thus,
Proposition \ref{isop} leads to
\begin{eqnarray*} m\left(\{u(\sigma,\tau)>\lambda\}\right)^{\frac{D-1}{D}}&\leq& Cm\left(\partial\{u(\sigma,\tau)
>\lambda\}\cap (\mathbb R_+)^2\right)\\
&=& Cm\left(\{u(\sigma,\tau)=\lambda\}\cap(\mathbb R_+)^2\right),\end{eqnarray*}
whence
\[\left(\int_{(\mathbb R_+)^2}\sigma^a\tau^b|u|^{\frac{D}{D-1}}d\sigma d\tau\right)^{\frac{D-1}{D}}
\leq C\int_0^{+\infty}m\left(\{u(\sigma,\tau)=\lambda\}\cap(\mathbb R_+)^2\right)d\lambda.\]

Let $u_{ev}$ be the even extension of $u$ with respect to $\sigma$ and $\tau$ in $\mathbb R^2$. Then,
\[\int_0^{+\infty}m\left(\{u(\sigma,\tau)=\lambda\}\cap(\mathbb R_+)^2\right)d\lambda=\frac14
\int_0^{+\infty}m\left(\{u_{ev}(\sigma,\tau)=\lambda\}\right)d\lambda,\]
and by the coarea formula
\[\int_0^{+\infty}m\left(\{u_{ev}(\sigma,\tau)=\lambda\}\right)d\lambda
=\int_{\mathbb R^2}\sigma^a\tau^b|\nabla u_{ev}|d\sigma d\tau.\]
Thus, we obtain
\[\left(\int_{(\mathbb R_+)^2}\sigma^a\tau^b|u|^{\frac{D}{D-1}}d\sigma d\tau\right)^{\frac{D-1}{D}}\leq C 
\int_{(\mathbb R_+)^2}\sigma^a\tau^b|\nabla u|d\sigma d\tau,\]
and the proposition is proved for $q=1$.

Finally, let us prove the case $1<q<D$. Take $u$ satisfying the hypotheses of Proposition
\ref{sobx^A}, and define $v=u^{\gamma}$, where $\gamma=\frac{q^*}{1^*}$. Since $\gamma>1$,
we have that $v$ also satisfies the hypotheses of the proposition, and we can apply the weighted Sobolev 
inequality with $q=1$ to get
\begin{eqnarray*}\left(\int_{(\mathbb R_+)^2}\sigma^a\tau^b|u|^{q^*}d\sigma d\tau\right)^{1/1^*}&=&
\left(\int_{(\mathbb R_+)^2}\sigma^a\tau^b|v|^{\frac{D}{D-1}}d\sigma d\tau\right)^{\frac{D-1}{D}}\\
&\leq& C\int_{(\mathbb R_+)^2}\sigma^a\tau^b|\nabla v|d\sigma d\tau.\end{eqnarray*} 
Now, $|\nabla v|=\gamma u^{\gamma-1}|\nabla u|$,
and by H\"older's inequality it follows that
\[\int_{(\mathbb R_+)^2}\sigma^a\tau^b|\nabla v|d\sigma d\tau\leq C\left(\int_{(\mathbb R_+)^2}\sigma^a\tau^b
|\nabla u|^qd\sigma d\tau\right)^{1/q}\left(\int_{(\mathbb R_+)^2}\sigma^a\tau^b|u|^{(\gamma-1)q'}
d\sigma d\tau\right)^{1/q'}.\]
But from the definition of $\gamma$ and $q^*$ it follows that 
\[\frac{\gamma-1}{q^*}=\frac{1}{1^*}-\frac{1}{q^*}=
\frac{1}{q'},\qquad (\gamma-1)q'=q^*,\] and hence
\[\left(\int_{(\mathbb R_+)^2}\sigma^a\tau^b|u|^{q^*}d\sigma d\tau\right)^{1/q^*}\leq
C\left(\int_{(\mathbb R_+)^2}\sigma^a\tau^b|\nabla u|^{q}d\sigma d\tau\right)^{1/q},\] as desired.
\qed

\end{document}